\newtheorem{thm}{Theorem}
\newtheorem{lem}{Lemma}
\newtheorem{prop}{Proposition}
\newtheorem{rem}{Remark}
\newtheorem{assump}{Assumption}
\newcommand{\Rmnum}[1]{\expandafter\@slowromancap\romannumeral #1@}
\begin{document}
\title{Distributed Linearized ADMM for Network Cost Minimization}
\author{Xuanyu Cao and K. J. Ray Liu, \emph{Fellow, IEEE}\\
Email: \{apogne, kjrliu\}@umd.edu\\
Department of Electrical and Computer Engineering, University of Maryland, College Park, MD\\}
\maketitle
\begin{abstract}
In this work, we study a generic network cost minimization problem, in which every node has a local decision vector to determine. Each node incurs a cost depending on its decision vector and each link also incurs a cost depending on the decision vectors of its two end nodes. All nodes cooperate to minimize the overall network cost. The formulated network cost minimization problem has broad applications in distributed signal processing and control over multi-agent systems. To obtain a decentralized algorithm for the formulated problem, we resort to the distributed alternating direction method of multipliers (DADMM). However, each iteration of the DADMM involves solving a local optimization problem at each node, leading to intractable computational burden in many circumstances. As such, we propose a distributed linearized ADMM (DLADMM) algorithm for network cost minimization. In the DLADMM, each iteration only involves closed-form computations and avoids local optimization problems, which greatly reduces the computational complexity compared to the DADMM. We prove that the DLADMM converges to an optimal point when the local cost functions are convex and have Lipschitz continuous gradients. Linear convergence rate of the DLADMM is also established if the local cost functions are further strongly convex. Numerical experiments are conducted to corroborate the effectiveness of the DLADMM and we observe that the DLADMM has similar convergence performance as DADMM does while the former enjoys much lower computational overhead. The impact of network topology, connectivity and algorithm parameters are also investigated through simulations.
\end{abstract}

\begin{IEEEkeywords}
Decentralized optimization, network optimization, alternating direction method of multipliers
\end{IEEEkeywords}

\section{Introduction}

The last decade has witnessed the advances of decentralized signal processing and control over networked multi-agent systems, which result in great research interest in distributed optimizations over networks. Such distributed optimization problems arise in fields such as adaptive signal processing over networks \cite{sayed2014adaptive}, distributed estimation over sensor networks \cite{dimakis2010gossip,predd2009collaborative}, decentralized power system state estimation and management \cite{guo2017hierarchical,gan2013optimal} as well as signal processing for communication networks \cite{shen2012distributed,huang2006distributed}. In these applications, data are distributed over individual nodes across the network. Centralized data processing and optimization suffer from high or even prohibitive communication overload and are vulnerable to link failures and network congestions. As such, optimizing and processing data in a decentralized manner, where only local information exchange among neighbors is allowed, are more favorable.

In the literature, distributed optimization has been extensively studied recently. Two important categories of distributed optimization problems are distributed network utility maximization (NUM) and consensus optimization. In distributed NUM, each agent has a local decision variable, based on which it obtains some utility. Agents cooperatively maximize the total utilities of the network subject to some coupling resource constraints such as the link capacity constraint in communication networks. For NUM, Wei \emph{et al.} propose and analyze a distributed Newton method in \cite{wei2013distributed1,wei2013distributed2}, while the effect of noisy information exchange is studied in \cite{zhang2008impact}. Moreover, Niu and Li present an asynchronous decentralized algorithm with elegant pricing interpretations for NUM \cite{niu2016asynchronous}. On the other hand, in consensus optimization, all agents share the same decision variable but have different local cost functions and the goal is to cooperatively minimize the total cost of the network. Nedic and Ozdaglar propose a decentralized subgradient method for consensus optimization in \cite{nedic2009distributed} while a dual averaging method is presented in \cite{duchi2012dual}. Specific forms of consensus problems such as adaptive signal processing over networks \cite{sayed2014adaptive} and average consensus (where agents cooperate to compute the average of individuals' data) \cite{erseghe2011fast} have been studied by using the alternating direction method of multipliers (ADMM). More recently, the general form of consensus problem is investigated by using the distributed ADMM (DADMM) in \cite{shi2014linear}, where linear convergence rate is established under some technical conditions. Later, several variants of DADMM are proposed for the consensus problems, including linearized ADMM \cite{ling2015dlm}, quadratically approximated ADMM \cite{mokhtari2016dqm} and dynamic ADMM \cite{ling2014decentralized}.

In all the aforementioned works, only costs or utilities at individual nodes are taken into consideration while the costs or gains of links are ignored. For example, for consensus optimization, the network cost is only composed of local cost at each node and the effect of the link is not incorporated. In fact, for consensus problems, though the decentralized algorithms may depend on the network topology (the links connecting nodes), the problem formulation itself is independent of the network structure. This is not suitable for many applications in distributed signal processing and control, where the notion of link cost or link utility naturally arises. For example, in multitask adaptive learning \cite{chen2014multitask}, each node $i$ aims at estimating its weight vector $\mathbf{w}_i$, which, in contrast to the consensus problems, is different from other nodes' weight vectors. In most networks, neighbor nodes tend to have similar weight vectors. To incorporate this prior knowledge into the estimator, the objective function to be minimized should include terms promoting similarity between neighbors such as $\|\mathbf{w}_i-\mathbf{w}_j\|_2^2$, where $i,j$ are neighbors. This term is tantamount to a link cost of the link $(i,j)$.

In this paper, we study the network cost minimization problem, where the network cost encompasses both node costs and link costs. To obtain a distributed algorithm for the problem, we resort to the distributed alternating direction method of multipliers (DADMM) \cite{boyd2011distributed}, which generally converges faster than distributed subgradient method \cite{nedic2009distributed}. However, each iteration of the DADMM algorithm involves solving a local optimization problem at each node, which is a major computational burden. To avoid this, we propose a distributed linearized ADMM (DLADMM) algorithm for network cost minimization. The DLADMM algorithm replaces the local optimization problem with closed form computations through linearizations and thus greatly reduce the computational complexity compared to DADMM. We further theoretically demonstrate that the DLADMM algorithm has appealing convergence properties. Our contributions can be summarized as follows.

\begin{itemize}
\item We formulate a generic form of network cost minimization problem incorporating both node costs and link costs. The formulated problem has broad applications in distributed signal processing and control in networked systems.
\item A distributed linearized ADMM algorithm for the network cost minimization problem is presented. The DLADMM algorithm operates in a decentralized manner and each iteration only consists of simple closed form computations, which endows the DLADMM with much lower computational overhead than the DADMM algorithm.
\item We prove that the DLADMM algorithm converges to an optimal point if the local cost functions are convex and have Lipschitz continuous gradients. Linear convergence rate of the DLADMM algorithm is also established provided that the local cost functions are further strongly convex.
\item Numerical experiments are conducted to validate the performance of the DLADMM algorithm. We empirically observe that the DLADMM algorithm has similar convergence speed as the DADMM algorithm does while the former enjoys much lower computational complexity. The impact of network topology, connectivity and algorithm parameters is also investigated.
\end{itemize}

The organization of the rest of this paper is as follows. In Section \Rmnum{2}, the network cost minimization problem is formally formulated and the DLADMM, DADMM algorithms are developed. In Section \Rmnum{3}, the convergence properties of the DLADMM algorithm are analyzed. In Section \Rmnum{4}, numerical simulations are conducted. In Section \Rmnum{5}, we conclude this work.

\section{Problem Statement and Algorithm Development}
In this section, we first motivate and formulate the network cost minimization problem. Then, we present a brief review of the basics of the ADMM, following which a distributed ADMM (DADMM) algorithm for the network cost minimization problem is shown. Finally, to reduce the computational burden of the DADMM, we propose a distributed linearized ADMM (DLADMM) algorithm for the network cost minimization problem.

\subsection{The Statement of the Problem}
Consider a network of $n$ nodes and some links between these nodes. We assume that the network is a simple graph, i.e., the network is undirected with no self-loop and there is at most one edge between any pair of nodes. Denote the number of links as $m$, in which $(i,j)$ and $(j,i)$ are counted as two links for ease of later exposition. Denote the set of neighbors of node $i$ (those who are linked with node $i$) as $\Omega_i$. The network can be either connected or disconnected (there does not necessarily exist a path connecting every pair of nodes). Suppose each node $i$ has a $p$-dimensional local decision variable $\mathbf{x}_i\in\mathbb{R}^p$. Given $\mathbf{x}_i$, the cost at node $i$ is $f_i(\mathbf{x}_i)$, where $f_i$ is called the node cost function at node $i$. Moreover, given two connected nodes $i$ and $j$ and their decision variables $\mathbf{x}_i$ and $\mathbf{x}_j$, there is a cost of $g_{ij}(\mathbf{x}_i,\mathbf{x}_j)$ associated with the link $(i,j)$, where $g_{ij}$ is called the link cost function of the link $(i,j)$. The goal of the network is to solve the following network cost minimization problem in a decentralized manner:
\begin{align}\label{NCM}
\text{Minimize}~\sum_{i=1}^nf_i(\mathbf{x}_i)+\sum_{i=1}^n\sum_{j\in\Omega_i}g_{ij}(\mathbf{x}_i,\mathbf{x}_j).
\end{align}
The problem formulation \eqref{NCM} has broad applications, among which we name three in the following.
\begin{itemize}
\item In distributed estimation over (sensor) networks, each node $i$ has a local unknown vector $\mathbf{x}_i$ to be estimated. The cost at node $i$, i.e., $f_i(\mathbf{x}_i)$ may be some squared error or the negative log-likelihood (the former can be regarded as a special case of the latter when the noise is Gaussian) with respect to the local data observed by node $i$. The link cost $g_{ij}(\mathbf{x}_i,\mathbf{x}_j)$ for a link $(i,j)$ can be used to enforce similarity between neighbor nodes, e.g., $\|\mathbf{x}_i-\mathbf{x}_j\|_2^2$ in multitask adaptive networks in \cite{chen2014multitask,monajemi2016informed}.
\item For resource allocation over networks, $\mathbf{x}_i$ corresponds to some resources at node $i$ and the node cost $f_i(\mathbf{x}_i)$ is the negative of node $i$'s utility. The link cost $g_{ij}(\mathbf{x}_i,\mathbf{x}_j)$ for a link $(i,j)$ may represent the negative effect of the consumption of the resources $\mathbf{x}_i$ and $\mathbf{x}_j$. For instance, in wireless networks, $\mathbf{x}_i$ may be the transmission power of node $i$ and two nodes are linked if they are within the wireless interference range. In such a case, the link cost $g_{ij}(\mathbf{x}_i,\mathbf{x}_j)$ for a link $(i,j)$ can be used to quantify the cost incurred by mutual interference in wireless communications.
\item For an image, each $x_i$ is the value of the $i$-th pixel and two pixels (or nodes) are linked if they are adjacent. In the image denoising problem, one wants to minimize the total variations of the pixels (as noises are often irregular values making the pixels abnormally different from their neighbor pixels) while remaining faithful to the given noisy image. The node cost $f_i(x_i)$ can be used to quantify the deviation of $x_i$ from the given noisy pixel $\widetilde{x}_i$ and the link cost $g_{ij}(x_i,x_j)$ can represent the difference between the two neighbor pixels $i$ and $j$.
\end{itemize}

For ease of reference, we define the following assumptions, some of which will be adopted in later theorems.

\begin{assump}
All the node cost functions $f_i$'s and the link cost functions $g_{ij}$'s are convex.
\end{assump}

\begin{assump}
All the node cost functions $f_i$'s and the link cost functions $g_{ij}$'s have Lipschitz continuous gradients with constant $L>0$, i.e., (a) $\forall i,\mathbf{x}_i,\mathbf{x}_i'\in\mathbb{R}^p$:
\begin{align}
\|\nabla f_i(\mathbf{x}_i)-\nabla f_i(\mathbf{x}_i')\|_2\leq L\|\mathbf{x}_i-\mathbf{x}_i'\|_2;
\end{align}
(b) $\forall i,j\in\Omega_i,\mathbf{x}_i,\mathbf{x}_j,\mathbf{x}_i',\mathbf{x}_j'\in\mathbb{R}^p$:
\begin{align}
\|\nabla g_{ij}(\mathbf{x}_i,\mathbf{x}_j)-\nabla g_{ij}(\mathbf{x}_i',\mathbf{x}_j')\|_2\leq L\left\|
\left[
\begin{array}{c}
\mathbf{x}_i\\
\mathbf{x}_j
\end{array}
\right]-
\left[
\begin{array}{c}
\mathbf{x}_i'\\
\mathbf{x}_j'
\end{array}
\right]
\right\|_2.
\end{align}
\end{assump}

\begin{assump}
All the node cost functions $f_i$'s and the link cost functions $g_{ij}$'s are strongly convex with constant $\tau>0$, i.e., (a) For any $i=1,...,n$:
\begin{align}\label{strong_f}
(\nabla f_i(\mathbf{x}_i)-\nabla f_i(\mathbf{x}_i'))^\mathsf{T}(\mathbf{x}_i-\mathbf{x}_i')\geq\tau\|\mathbf{x}_i-\mathbf{x}_i'\|_2^2,~~\forall \mathbf{x}_i,\mathbf{x}_i'\in\mathbb{R}^p;
\end{align}
(b)  For any $i,j\in\Omega_i$:
\begin{align}\label{strong_g}
\begin{split}
&\left(
\left[
\begin{array}{c}
\nabla_{\mathbf{x}_i}g_{ij}(\mathbf{x}_i,\mathbf{x}_j)\\
\nabla_{\mathbf{x}_j}g_{ij}(\mathbf{x}_i,\mathbf{x}_j)
\end{array}
\right]-
\left[
\begin{array}{c}
\nabla_{\mathbf{x}_i'}g_{ij}(\mathbf{x}_i',\mathbf{x}_j')\\
\nabla_{\mathbf{x}_j'}g_{ij}(\mathbf{x}_i',\mathbf{x}_j')
\end{array}
\right]
\right)^\mathsf{T}\left(\left[
\begin{array}{c}
\mathbf{x}_i\\
\mathbf{x}_j
\end{array}
\right]-
\left[
\begin{array}{c}
\mathbf{x}_i'\\
\mathbf{x}_j'
\end{array}
\right]\right)\\
&\geq\tau\left\|\left[
\begin{array}{c}
\mathbf{x}_i\\
\mathbf{x}_j
\end{array}
\right]-
\left[
\begin{array}{c}
\mathbf{x}_i'\\
\mathbf{x}_j'
\end{array}
\right]\right\|_2^2,~~\forall\mathbf{x}_i,\mathbf{x}_j,\mathbf{x}_i',\mathbf{x}_j'\in\mathbb{R}^p.
\end{split}
\end{align}
\end{assump}

\begin{rem}
We note the following facts. When $f_i$ is twice differentiable, the condition \eqref{strong_f} of Assumption 3 is equivalent to $\nabla^2f_i(\mathbf{x}_i)\succeq\tau\mathbf{I}_p,\forall\mathbf{x}_i$.
Similarly, when $g_{ij}$ is twice differentiable, the condition \eqref{strong_g} of Assumption 3 is equivalent to $\nabla^2g_{ij}(\mathbf{x}_i,\mathbf{x}_j)\succeq\tau\mathbf{I}_{2p},\forall\mathbf{x}_i,\mathbf{x}_j$. This second order definition of strong convexity is more intuitively acceptable and has been used in the analysis of convex optimization algorithms in the literature \cite{boyd2004convex}. But it requires twice differentiability and is not directly useful in the analysis in this work.
\end{rem}

\begin{rem}
All three assumptions are standard in the literature of numerical optimization when analyzing the performance of optimization algorithms \cite{boyd2004convex,shi2014linear,deng2016global}.
\end{rem}
\subsection{Preliminaries of ADMM}
ADMM is an optimization framework widely applied to various signal processing applications, including wireless communications \cite{shen2012distributed}, power systems \cite{zhang2016admm} and multi-agent coordination \cite{chang2014proximal}. It enjoys fast convergence speed under mild technical conditions \cite{deng2016global} and is especially suitable for the development of distributed algorithms \cite{boyd2011distributed,bertsekas1989parallel}. ADMM solves problems of the following form:
\begin{eqnarray}\label{admm_prime}
\text{Minimize}_{\mathbf{x},\mathbf{z}} f(\mathbf{x})+g(\mathbf{z})~~\text{s.t.}~~\mathbf{Ax+Bz=c},
\end{eqnarray}
where $\mathbf{A}\in\mathbb{R}^{p\times n},B\in\mathbb{R}^{p\times m},c\in\mathbb{R}^p$ are constants and $\mathbf{x}\in\mathbb{R}^n,\mathbf{z}\in\mathbb{R}^m$ are optimization variables. $f:\mathbb{R}^n\mapsto\mathbb{R}$ and $g:\mathbb{R}^m\mapsto\mathbb{R}$ are two convex functions. The augmented Lagrangian can be formed as:
\begin{equation}
\mathfrak{L}_\rho(\mathbf{x,z,y})=f(\mathbf{x})+g(\mathbf{z})+\mathbf{y}^\mathsf{T}(\mathbf{Ax+Bz-c})+\frac{\rho}{2}\|\mathbf{Ax+Bz-c}\|_2^2,
\end{equation}
where $\mathbf{y}\in\mathbb{R}^p$ is the Lagrange multiplier and $\rho>0$ is some constant. The ADMM then iterates over the following three steps for $k\geq0$ (the iteration index):
\begin{eqnarray}
&&\mathbf{x}^{k+1}=\arg\min_\mathbf{x} \mathfrak{L}_\rho\left(\mathbf{x},\mathbf{z}^k,\mathbf{y}^k\right),\label{x_prime}\\
&&\mathbf{z}^{k+1}=\arg\min_\mathbf{z} \mathfrak{L}_\rho\left(\mathbf{x}^{k+1},\mathbf{z},\mathbf{y}^k\right),\label{z_prime}\\
&&\mathbf{y}^{k+1}=\mathbf{y}^{k}+\rho\left(\mathbf{Ax}^{k+1}+\mathbf{Bz}^{k+1}-\mathbf{c}\right).\label{multiplier_prime}
\end{eqnarray}
The ADMM is guaranteed to converge to the optimal point of \eqref{admm_prime} as long as $f$ and $g$ are convex \cite{boyd2011distributed,bertsekas1989parallel}. It is recently shown that global linear convergence can be ensured provided additional assumptions on problem \eqref{admm_prime} holds \cite{deng2016global}.

\subsection{Development of the Distributed ADMM (DADMM) for Network Cost Minimization}
To develop an ADMM algorithm for \eqref{NCM}, we introduce auxiliary variables $\mathbf{y}_i$ and $\mathbf{z}_{ij}$ $\forall i,j\in\Omega_i$ and reformulate \eqref{NCM} equivalently as:
\begin{align}\label{admm_formulation}
\text{Minimize}~&\sum_{i=1}^nf_i(\mathbf{x}_i)+\sum_{i=1}^n\sum_{j\in\Omega_i}g_{ij}(\mathbf{y}_i,\mathbf{z}_{ij}).\\
\text{s.t.}~&\mathbf{x}_i=\mathbf{y}_i,~~i=1,...,n,\\
&\mathbf{x}_j=\mathbf{z}_{ij},~~i=1,...,n,j\in\Omega_i.
\end{align}

Further introducing Lagrangian multipliers $\boldsymbol{\lambda}_i,\boldsymbol{\mu}_{ij}\in\mathbb{R}^p,\forall i=1,...,n,j\in\Omega_i$, we form the augmented Lagrangian of the above optimization problem as:
\begin{align}
\begin{split}
\mathfrak{L}_\rho(\mathbf{x,y,z},\boldsymbol{\lambda,\mu})=&\sum_{i=1}^nf_i(\mathbf{x}_i)+\sum_{i=1}^n\sum_{j\in\Omega_i}g_{ij}(\mathbf{y}_i,\mathbf{z}_{ij})+\sum_{i=1}^n\boldsymbol{\lambda}_i^\mathsf{T}(\mathbf{x}_i-\mathbf{y}_i)+\sum_{i=1}^n\sum_{j\in\Omega_i}\boldsymbol{\mu}_{ij}^\mathsf{T}(\mathbf{x}_j-\mathbf{z}_{ij})\\
&+\frac{\rho}{2}\sum_{i=1}^n\|\mathbf{x}_i-\mathbf{y}_i\|_2^2+\frac{\rho}{2}\sum_{i=1}^n\sum_{j\in\Omega_i}\|\mathbf{x}_j-\mathbf{z}_{ij}\|_2^2,
\end{split}
\end{align}
where $\mathbf{x}\in\mathbb{R}^{np}$ is the concatenation of all $\mathbf{x}_i$'s into a column vector, i.e., $\mathbf{x}=\left[\mathbf{x}_1^\mathsf{T},...,\mathbf{x}_n^\mathsf{T}\right]^\mathsf{T}$; $\mathbf{y,}\boldsymbol{\lambda}\in\mathbb{R}^{np}$ are analogously defined; $\mathbf{z}\in\mathbb{R}^{mp}$ is the concatenation of all $\mathbf{z}_{ij}$'s in an arbitrary order of links; $\boldsymbol{\mu}\in\mathbb{R}^{mp}$ is analogously defined with the same link order as $\mathbf{z}$; $\rho>0$ is some positive constant. The ADMM algorithm can be derived as follows.

\subsubsection{Updating $\mathbf{x}$}
The update of $\mathbf{x}$ in the ADMM is:
\begin{align}
\mathbf{x}^{k+1}=\arg\min_\mathbf{x}\sum_{i=1}^nf_i(\mathbf{x}_i)+\sum_{i=1}^n\boldsymbol{\lambda}_i^{k\mathsf{T}}\mathbf{x}_i+\sum_{i=1}^n\sum_{j\in\Omega_i}\boldsymbol{\mu}_{ij}^{k\mathsf{T}}\mathbf{x}_j+\frac{\rho}{2}\sum_{i=1}^n\left\|\mathbf{x}_i-\mathbf{y}_i^k\right\|_2^2+\frac{\rho}{2}\sum_{i=1}^n\sum_{j\in\Omega_i}\left\|\mathbf{x}_j-\mathbf{z}_{ij}^k\right\|_2^2,
\end{align}
which can be decomposed across nodes: $\forall i,$
\begin{align}\label{x_admm}
\mathbf{x}_i^{k+1}=\arg\min_{\mathbf{x}_i}f_i(\mathbf{x}_i)+\boldsymbol{\lambda}_i^{k\mathsf{T}}\mathbf{x}_i+\sum_{l\in\Omega_i}\boldsymbol{\mu}_{li}^{k\mathsf{T}}\mathbf{x}_i+\frac{\rho}{2}\left\|\mathbf{x}_i-\mathbf{y}_i^k\right\|_2^2+\frac{\rho}{2}\sum_{l\in\Omega_i}\left\|\mathbf{x}_i-\mathbf{z}_{li}^k\right\|_2^2.
\end{align}

\subsubsection{Updating $\mathbf{y,z}$}
The update of $\mathbf{y,z}$ in the ADMM is:
\begin{align}
\begin{split}
\left\{\mathbf{y}^{k+1},\mathbf{z}^{k+1}\right\}=\arg\min_{\mathbf{y,z}}&\sum_{i=1}^n\sum_{j\in\Omega_i}g_{ij}(\mathbf{y}_i,\mathbf{z}_{ij})-\sum_{i=1}^n\boldsymbol{\lambda}_i^{k\mathsf{T}}\mathbf{y}_i-\sum_{i=1}^n\sum_{j\in\Omega_i}\boldsymbol{\mu}_{ij}^{k\mathsf{T}}\mathbf{z}_{ij}+\frac{\rho}{2}\sum_{i=1}^n\left\|\mathbf{y}_i-\mathbf{x}_i^{k+1}\right\|_2^2,\\
&+\frac{\rho}{2}\sum_{i=1}^n\sum_{j\in\Omega_i}\left\|\mathbf{z}_{ij}-\mathbf{x}_j^{k+1}\right\|_2^2
\end{split}
\end{align}
which can be decomposed across nodes: $\forall i,$
\begin{align}
\begin{split}\label{yz_admm}
\left\{\mathbf{y}_i^{k+1},\left\{\mathbf{z}_{ij}^{k+1}\right\}_{j\in\Omega_i}\right\}=\arg\min_{\mathbf{y}_i,\{\mathbf{z}_{ij}\}_{j\in\Omega_i}}&\sum_{j\in\Omega_i}g_{ij}(\mathbf{y}_i,\mathbf{z}_{ij})-\boldsymbol{\lambda}_i^{k\mathsf{T}}\mathbf{y}_i-\sum_{j\in\Omega_i}\boldsymbol{\mu}_{ij}^{k\mathsf{T}}\mathbf{z}_{ij}+\frac{\rho}{2}\left\|\mathbf{y}_i-\mathbf{x}_i^{k+1}\right\|_2^2\\
&+\frac{\rho}{2}\sum_{j\in\Omega_i}\left\|\mathbf{z}_{ij}-\mathbf{x}_j^{k+1}\right\|_2^2.
\end{split}
\end{align}

\subsubsection{Updating $\boldsymbol{\lambda},\boldsymbol{\mu}$}
The update of $\boldsymbol{\lambda},\boldsymbol{\mu}$ is also decomposed across nodes: $\forall i,j\in\Omega_i$
\begin{align}
\label{lambda_admm}&\boldsymbol{\lambda}_i^{k+1}=\boldsymbol{\lambda}_i^k+\rho\left(\mathbf{x}_i^{k+1}-\mathbf{y}_i^{k+1}\right),\\
\label{mu_admm}&\boldsymbol{\mu}_{ij}^{k+1}=\boldsymbol{\mu}_{ij}^k+\rho\left(\mathbf{x}_j^{k+1}-\mathbf{z}_{ij}^{k+1}\right).
\end{align}

Equations \eqref{x_admm}, \eqref{yz_admm}, \eqref{lambda_admm} and \eqref{mu_admm} together lead to a distributed ADMM (DADMM) algorithm for problem \eqref{NCM}, which is summarized from the perspective of an arbitrary node $i$ in Algorithm \ref{admm}. We note that only the values of $\mathbf{x,z},\boldsymbol{\mu}$ at the neighbors are needed for the ADMM updates. Therefore, in terms of information exchange, each node $i$ only needs to (i) broadcast $\mathbf{x}_i$ to the neighbors in $\Omega_i$; (ii) transmit $\mathbf{z}_{ij}$ to the neighbor $j$ for each $j\in\Omega_i$; (iii) transmit $\boldsymbol{\mu}_{ij}$ to the neighbor $j$ for each $j\in\Omega_i$.

\begin{algorithm}[!htbp]
\caption{The DADMM algorithm run at node $i$}
\begin{algorithmic}[1]\label{admm}
\STATE Initialize $\mathbf{x}_i^0=\mathbf{y}_i^0=\boldsymbol{\lambda}_i^0=\mathbf{0}$ and $\mathbf{z}_{ij}^0=\boldsymbol{\mu}_{ij}^0=\mathbf{0},\forall j\in\Omega_i$. $k=0$.
\STATE \textbf{Repeat:}
\STATE Compute $\mathbf{x}_i^{k+1}$ by solving the local optimization problem \eqref{x_admm} and then broadcast $\mathbf{x}_i^{k+1}$ to the neighbors $\Omega_i$.
\STATE Compute $\mathbf{y}_i^{k+1}$ and $\mathbf{z}_{ij}^{k+1},j\in\Omega_i$ by solving the local optimization problem \eqref{yz_admm} and then transmit $\mathbf{z}_{ij}^{k+1}$ to the neighbor node $j$ for each $j\in\Omega_i$.
\STATE Compute $\boldsymbol{\lambda}_i^{k+1}$ and $\boldsymbol{\mu}_{ij}^{k+1},j\in\Omega_i$ according to \eqref{lambda_admm} and \eqref{mu_admm}, respectively. Transmit $\boldsymbol{\mu}_{ij}^{k+1}$ to the neighbor node $j$ for each $j\in\Omega_i$.
\STATE $k\leftarrow k+1$.
\end{algorithmic}
\end{algorithm}

\subsection{Development of the Distributed Linearized ADMM (DLADMM) for Network Cost Minimization}

In the DADMM, i.e., Algorithm \ref{admm}, the updates for $\mathbf{x,y,z}$ involve solving local optimization problems \eqref{x_admm} and \eqref{yz_admm}, which generally do not admit close-form solutions and have to be solved iteratively. This can be a major computational burden for Algorithm \ref{admm} especially when individual node has only limited computational capability, e.g., the cheap sensors vastly deployed in sensor networks usually can only carry out simple calculations. This motivates us to propose an algorithm which can approximately solve the local optimization problems efficiently and most preferably with closed form solutions. To this end, we first define $f(\mathbf{x})=\sum_{i=1}^nf_i(\mathbf{x}_i)$ and $g(\mathbf{y},\mathbf{z})=\sum_{i=1}^n\sum_{j\in\Omega_i}g_{ij}(\mathbf{y}_i,\mathbf{z}_{ij})$. We further define a block matrix $\mathbf{A}\in\mathbb{R}^{mp\times np}$ consisting of $m\times n$ blocks of matrices $\mathbf{A}_{kj}\in\mathbb{R}^{p\times p}$, where $\mathbf{A}_{kj}$ is equal to $\mathbb{I}_{p\times p}$ if the $k$-th $p$-dimensional block of $\mathbf{z}$ is $\mathbf{z}_{ij}$ for some $i=1,...,n$, otherwise $\mathbf{A}_{kj}$ is equal to $\mathbf{0}_{p\times p}$. Then, we may rewrite problem \eqref{admm_formulation} compactly as:
\begin{align}\label{admm_formulation_1}
\text{Minimize}~~&f(\mathbf{x})+g(\mathbf{y,z})\\
\text{s.t.}~~&\mathbf{x=y},\\
&\mathbf{Ax=z}.
\end{align}
Further define $\mathbf{w}=\left[\mathbf{y}^\mathsf{T},\mathbf{z}^\mathsf{T}\right]^\mathsf{T}$ and $\mathbf{B}=\left[\mathbf{I},\mathbf{A}^\mathsf{T}\right]^\mathsf{T}$. Thus, \eqref{admm_formulation_1} can be rewritten as:
\begin{align}\label{admm_formulation_2}
\text{Minimize}~~&f(\mathbf{x})+g(\mathbf{w})\\
\text{s.t.}~~&\mathbf{Bx-w=0}.
\end{align}
The augmented Lagrangian can be written as:
\begin{align}
\mathfrak{L}_\rho(\mathbf{x,w,}\boldsymbol{\alpha})=f(\mathbf{x})+g(\mathbf{w})+\alpha^\mathsf{T}(\mathbf{Bx-w})+\frac{\rho}{2}\|\mathbf{Bx-w}\|_2^2,
\end{align}
where $\boldsymbol\alpha=\left[\boldsymbol\lambda^\mathsf{T},\boldsymbol\mu^\mathsf{T}\right]^\mathsf{T}$ is the Lagrangian multiplier. The original DADMM algorithm necessitates solving local optimization problems involving $f$ and $g$. To avoid this burden, we approximate $f,g$ with their first order approximations and propose a distributed linearized ADMM (DLADMM) algorithm for network cost minimization in the following.

\subsubsection{Updating $\mathbf{x}$}
The update of $\mathbf{x}$ in DLADMM is:
\begin{align}\label{x_DLADMM_opt}
\mathbf{x}^{k+1}=\arg\min_\mathbf{x}\nabla f\left(\mathbf{x}^k\right)^\mathsf{T}\left(\mathbf{x}-\mathbf{x}^k\right)+\frac{c}{2}\left\|\mathbf{x}-\mathbf{x}^k\right\|_2^2+\boldsymbol{\alpha}^{k\mathsf{T}}\mathbf{Bx}+\frac{\rho}{2}\left\|\mathbf{Bx-w}^k\right\|_2^2,
\end{align}
where $c>0$ is some positive constant and the term $\frac{c}{2}\left\|\mathbf{x}-\mathbf{x}^k\right\|_2^2$ is to refrain $\mathbf{x}^{k+1}$ from being too far away from $\mathbf{x}^k$ as the first order approximation of $f$ around the point $\mathbf{x}^k$ is only accurate when $\mathbf{x}$ is close to $\mathbf{x}^k$. Note that this small step size or small variation between iterations is common in the literature of numerical optimization \cite{boyd2004convex} and adaptive signal processing such as least mean squares (LMS) \cite{Haykin:1996:AFT:230061}. Since the objective function in \eqref{x_DLADMM_opt} is a convex quadratic function of $\mathbf{x}$, the problem of \eqref{x_DLADMM_opt} can be solved in closed form through the first order condition:
\begin{align}\label{x_dladmm_c}
\nabla f\left(\mathbf{x}^k\right)+c\left(\mathbf{x}^{k+1}-\mathbf{x}^k\right)+\mathbf{B}^\mathsf{T}\boldsymbol{\alpha}^k+\rho\left(\mathbf{B}^\mathsf{T}\mathbf{Bx}^{k+1}-\mathbf{B}^\mathsf{T}\mathbf{w}^k\right)=0.
\end{align}
We note that the optimization problem \eqref{x_DLADMM_opt} can be decomposed across nodes:
\begin{align}
\begin{split}
\mathbf{x}_i^{k+1}=\arg\min_{\mathbf{x}_i}&\nabla f_i\left(\mathbf{x}_i^k\right)^\mathsf{T}\left(\mathbf{x}_i-\mathbf{x}_i^k\right)+\frac{c}{2}\left\|\mathbf{x}_i-\mathbf{x}_i^k\right\|_2^2+\boldsymbol{\lambda}_i^{k\mathsf{T}}\mathbf{x}_i+\sum_{l\in\Omega_i}\boldsymbol{\mu}_{li}^{k\mathsf{T}}\mathbf{x}_i+\frac{\rho}{2}\left\|\mathbf{x}_i-\mathbf{y}_i^k\right\|_2^2\\
&+\frac{\rho}{2}\sum_{l\in\Omega_i}\left\|\mathbf{x}_i-\mathbf{z}_{li}^k\right\|_2^2,
\end{split}
\end{align}
which can be solved in closed form:
\begin{align}\label{x_dladmm}
\mathbf{x}_i^{k+1}=\frac{1}{c+\rho+\rho|\Omega_i|}\left[-\nabla f_i\left(\mathbf{x}_i^k\right)+c\mathbf{x}_i^k-\lambda_i^k-\sum_{l\in\Omega_i}\boldsymbol{\mu}_{li}^k+\rho\mathbf{y}_i^k+\rho\sum_{l\in\Omega_i}\mathbf{z}_{li}^k\right].
\end{align}

\subsubsection{Updating $\mathbf{w}$, i.e., $\mathbf{y}$ and $\mathbf{z}$}
The update of $\mathbf{w}$ in the DLADMM algorithm is:
\begin{align}\label{w_DLADMM_opt}
\mathbf{w}^{k+1}=\arg\min_\mathbf{w}\nabla g\left(\mathbf{w}^k\right)^\mathsf{T}\left(\mathbf{w-w}^k\right)+\frac{c}{2}\left\|\mathbf{w-w}^k\right\|_2^2-\boldsymbol{\alpha}^{k\mathsf{T}}\mathbf{w}+\frac{\rho}{2}\left\|\mathbf{w-Bx}^{k+1}\right\|_2^2,
\end{align}
which is equivalent to:
\begin{align}\label{w_dladmm_c}
\nabla g(\mathbf{w}^k)+c\left(\mathbf{w}^{k+1}-\mathbf{w}^k\right)-\boldsymbol{\alpha}^k+\rho\left(\mathbf{w}^{k+1}-\mathbf{Bx}^{k+1}\right)=0.
\end{align}
Notice that the problem \eqref{w_DLADMM_opt} can also be decomposed across nodes:
\begin{align}
\begin{split}
\left\{\mathbf{y}_i^{k+1},\left\{\mathbf{z}_{ij}^{k+1}\right\}_{j\in\Omega_i}\right\}=\arg\min_{\mathbf{y}_i,\{\mathbf{z}_{ij}\}_{j\in\Omega_i}}&\sum_{j\in\Omega_i}
\left[
\begin{array}{c}
\nabla_{\mathbf{y}_i}g_{ij}\left(\mathbf{y}_i^k,\mathbf{z}_{ij}^k\right)\\
\nabla_{\mathbf{z}_{ij}}g_{ij}\left(\mathbf{y}_i^k,\mathbf{z}_{ij}^k\right)
\end{array}
\right]^\mathsf{T}
\left[
\begin{array}{c}
\mathbf{y}_i-\mathbf{y}_i^k\\
\mathbf{z}_{ij}-\mathbf{z}_{ij}^k
\end{array}
\right]
+\frac{c}{2}\left\|\mathbf{y}_i-\mathbf{y}_i^k\right\|_2^2\\
&+\frac{c}{2}\sum_{j\in\Omega_i}\left\|\mathbf{z}_{ij}-\mathbf{z}_{ij}^k\right\|_2^2-\boldsymbol{\lambda}_i^{k\mathsf{T}}\mathbf{y}_i-\sum_{j\in\Omega_i}\boldsymbol{\mu}_{ij}^{k\mathsf{T}}\mathbf{z}_{ij}\\
&+\frac{\rho}{2}\left\|\mathbf{y}_i-\mathbf{x}_i^{k+1}\right\|_2^2+\frac{\rho}{2}\sum_{j\in\Omega_i}\left\|\mathbf{z}_{ij}-\mathbf{x}_j^{k+1}\right\|_2^2,
\end{split}
\end{align}
which can be solved as:
\begin{align}
\label{y_dladmm}&\mathbf{y}_i^{k+1}=\frac{1}{c+\rho}\left[-\sum_{j\in\Omega_i}\nabla_{\mathbf{y}_i}g_{ij}\left(\mathbf{y}_i^k,\mathbf{z}_{ij}^k\right)+c\mathbf{y}_i^k+\boldsymbol{\lambda}_i^k+\rho\mathbf{x}_i^{k+1}\right],\\
\label{z_dladmm}&\mathbf{z}_{ij}^{k+1}=\frac{1}{c+\rho}\left[-\nabla_{\mathbf{z}_{ij}}g_{ij}\left(\mathbf{y}_i^k,\mathbf{z}_{ij}^k\right)+c\mathbf{z}_{ij}^k+\boldsymbol{\mu}_{ij}^k+\rho\mathbf{x}_j^{k+1}\right].
\end{align}

\subsubsection{Updating $\boldsymbol{\alpha}$, i.e., $\boldsymbol{\lambda}$ and $\boldsymbol{\mu}$}
The update of $\boldsymbol{\alpha}$ is:
\begin{align}\label{alpha_dladmm_c}
\boldsymbol{\alpha}^{k+1}=\boldsymbol{\alpha}^k+\rho\left(\mathbf{Bx}^{k+1}-\mathbf{w}^{k+1}\right),
\end{align}
which can be implemented in a decentralized manner as in \eqref{lambda_admm} and \eqref{mu_admm}. In other words, the update of the dual variables in DLADMM is the same as that of DADMM. Combining \eqref{x_dladmm}, \eqref{y_dladmm}, \eqref{z_dladmm}, \eqref{lambda_admm} and \eqref{mu_admm} yields the proposed DLADMM algorithm, which is summarized in Algorithm \ref{dladmm}. We remark that, as opposed to Algorithm \ref{admm}, each iteration of Algorithm \ref{dladmm} only involves direct closed form computations without solving any local optimization problems iteratively. This enables DLADMM to enjoy significantly lower computational complexity compared to DADMM.

\begin{algorithm}[!htbp]
\caption{The DLADMM algorithm run at node $i$}
\begin{algorithmic}[1]\label{dladmm}
\STATE Initialize $\mathbf{x}_i^0=\mathbf{y}_i^0=\boldsymbol{\lambda}_i^0=\mathbf{0}$ and $\mathbf{z}_{ij}^0=\boldsymbol{\mu}_{ij}^0=\mathbf{0},\forall j\in\Omega_i$. $k=0$.
\STATE \textbf{Repeat:}
\STATE Compute $\mathbf{x}_i^{k+1}$ according to \eqref{x_dladmm} and then broadcast $\mathbf{x}_i^{k+1}$ to the neighbors $\Omega_i$.
\STATE Compute $\mathbf{y}_i^{k+1}$ and $\mathbf{z}_{ij}^{k+1},j\in\Omega_i$ according to \eqref{y_dladmm} and \eqref{z_dladmm}, respectively. Then transmit $\mathbf{z}_{ij}^{k+1}$ to the neighbor node $j$ for each $j\in\Omega_i$.
\STATE Compute $\boldsymbol{\lambda}_i^{k+1}$ and $\boldsymbol{\mu}_{ij}^{k+1},j\in\Omega_i$ according to \eqref{lambda_admm} and \eqref{mu_admm}, respectively. Transmit $\boldsymbol{\mu}_{ij}^{k+1}$ to the neighbor node $j$ for each $j\in\Omega_i$.
\STATE $k\leftarrow k+1$.
\end{algorithmic}
\end{algorithm}

\section{Convergence Analysis}

In this section, we analyze the convergence behaviors of the proposed DLADMM algorithm for the network cost minimization problem \eqref{NCM}. Instead of analyzing the DLADMM algorithm outlined in Algorithm \ref{dladmm}, we will analyze its centralized version in \eqref{x_dladmm_c}, \eqref{w_dladmm_c} and \eqref{alpha_dladmm_c}, which are tantamount to their decentralized counterpart in Algorithm \ref{dladmm}. We perform convergence analysis based on \eqref{x_dladmm_c}, \eqref{w_dladmm_c} and \eqref{alpha_dladmm_c} as they are more compact and thus more amenable to analyses and expositions.\footnote{Note that completely equivalent analysis based on the decentralized implementation in Algorithm \ref{dladmm} can be conducted, though the notations are more cluttered.} Before formally analyzing the convergence of DLADMM, we first present some preliminaries. After that, we show the convergence guarantee of the DLADMM (Theorem 1) and in particular, a linear convergence rate of the DLADMM (Theorem 2).

\subsection{Preliminaries}

\begin{lem}\label{lemA}
If Assumption 2 holds, then $\nabla f$ is Lipschitz continuous with constant $L$ and $\nabla g$ is Lipschitz continuous with constant $M=\sqrt{L^2K^2+L^2K}$, where $K=\max_i|\Omega_i|$ is the maximum degree of the network.
\end{lem}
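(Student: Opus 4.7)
The plan is to handle $f$ and $g$ separately, exploiting their separable structure with respect to the blocks of $\mathbf{x}$ and $\mathbf{w}=[\mathbf{y}^\mathsf{T},\mathbf{z}^\mathsf{T}]^\mathsf{T}$, and controlling the degree of coupling through $K=\max_i|\Omega_i|$.

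For $\nabla f$ the argument is essentially one line: since $f(\mathbf{x})=\sum_i f_i(\mathbf{x}_i)$ is separable, $\nabla f(\mathbf{x})$ is the block-stacked vector of $\nabla f_i(\mathbf{x}_i)$. Squaring the norm and applying Assumption 2(a) blockwise yields $\|\nabla f(\mathbf{x})-\nabla f(\mathbf{x}')\|_2^2=\sum_i\|\nabla f_i(\mathbf{x}_i)-\nabla f_i(\mathbf{x}_i')\|_2^2\le L^2\sum_i\|\mathbf{x}_i-\mathbf{x}_i'\|_2^2=L^2\|\mathbf{x}-\mathbf{x}'\|_2^2$, giving the constant $L$.

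For $\nabla g$ the structure is asymmetric: each $\mathbf{z}_{ij}$ appears in exactly one summand $g_{ij}$, whereas each $\mathbf{y}_i$ appears in $|\Omega_i|$ summands. I would therefore split $\|\nabla g(\mathbf{w})-\nabla g(\mathbf{w}')\|_2^2$ into the sum over $\mathbf{z}_{ij}$-components and the sum over $\mathbf{y}_i$-components. For the $\mathbf{z}_{ij}$-components, each term $\|\nabla_{\mathbf{z}_{ij}}g_{ij}(\mathbf{y}_i,\mathbf{z}_{ij})-\nabla_{\mathbf{z}_{ij}}g_{ij}(\mathbf{y}_i',\mathbf{z}_{ij}')\|_2^2$ is dominated by the full $\nabla g_{ij}$ difference, which Assumption 2(b) bounds by $L^2(\|\mathbf{y}_i-\mathbf{y}_i'\|_2^2+\|\mathbf{z}_{ij}-\mathbf{z}_{ij}'\|_2^2)$; summing and using $\sum_{j\in\Omega_i}1=|\Omega_i|\le K$ gives a bound of the form $L^2 K\|\mathbf{y}-\mathbf{y}'\|_2^2+L^2\|\mathbf{z}-\mathbf{z}'\|_2^2$. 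For the $\mathbf{y}_i$-components, I would write $\nabla_{\mathbf{y}_i}g(\mathbf{w})=\sum_{j\in\Omega_i}\nabla_{\mathbf{y}_i}g_{ij}(\mathbf{y}_i,\mathbf{z}_{ij})$, apply Cauchy--Schwarz to pull out a factor of $|\Omega_i|$ when squaring the sum, then invoke Assumption 2(b) on each term, yielding $L^2 K^2\|\mathbf{y}-\mathbf{y}'\|_2^2+L^2 K\|\mathbf{z}-\mathbf{z}'\|_2^2$ after summing over $i$.

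Adding the two pieces gives $\|\nabla g(\mathbf{w})-\nabla g(\mathbf{w}')\|_2^2\le (L^2K^2+L^2K)\|\mathbf{y}-\mathbf{y}'\|_2^2+(L^2K+L^2)\|\mathbf{z}-\mathbf{z}'\|_2^2$. Since $K\ge 1$ in any nontrivial network, we have $L^2K+L^2\le L^2K^2+L^2K$, so both coefficients are dominated by $M^2=L^2K^2+L^2K$, and the bound consolidates to $M^2\|\mathbf{w}-\mathbf{w}'\|_2^2$. The main obstacle is really just the Cauchy--Schwarz step for $\nabla_{\mathbf{y}_i}g$: the temptation is to use a plain triangle inequality, which would produce an extra $|\Omega_i|^2$ rather than $|\Omega_i|$ after squaring, so using Cauchy--Schwarz in the form $(\sum_{j=1}^m a_j)^2\le m\sum_{j=1}^m a_j^2$ is what keeps the final constant at $\sqrt{L^2K^2+L^2K}$ rather than something larger.
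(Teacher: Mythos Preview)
Your proposal is correct and follows essentially the same route as the paper: blockwise application of Assumption~2(a) for $\nabla f$, and for $\nabla g$ a split into $\mathbf{y}_i$-components (handled via the Cauchy--Schwarz inequality $\|\sum_{j\in\Omega_i}v_j\|_2^2\le|\Omega_i|\sum_{j\in\Omega_i}\|v_j\|_2^2$) and $\mathbf{z}_{ij}$-components (handled directly), with the degree bound $|\Omega_i|\le K$ applied throughout. The only cosmetic difference is that you keep the $\mathbf{y}$ and $\mathbf{z}$ contributions separate until the end and then use $K\ge 1$ to absorb the smaller $\mathbf{z}$-coefficient, whereas the paper combines them earlier into $\|[\mathbf{y};\mathbf{z}]-[\mathbf{y}';\mathbf{z}']\|_2^2$; both bookkeepings yield the same constant $M=\sqrt{L^2K^2+L^2K}$.
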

\begin{proof}
The proof is given in Appendix A.
\end{proof}
We further note that the following fact from convex analysis \cite{vandenberghe2016gradient}.
\begin{lem}\label{lemB}
For any differentiable convex function $h:\mathbb{R}^l\mapsto\mathbb{R}$ and positive constant $L>0$, the following two statements are equivalent:
\begin{enumerate}
\item $\nabla h$ is Lipschitz continuous with constant $L$, i.e., $\left\|\nabla h(\mathbf{x})-\nabla h\left(\mathbf{x}'\right)\right\|_2\leq L\left\|\mathbf{x}-\mathbf{x}'\right\|_2,\forall \mathbf{x},\mathbf{x}'\in\mathbb{R}^l$.
\item $\left\|\nabla h(\mathbf{x})-\nabla h\left(\mathbf{x}'\right)\right\|_2^2\leq L\left(\mathbf{x}-\mathbf{x}'\right)^\mathsf{T}\left(\nabla h(\mathbf{x})-\nabla h\left(\mathbf{x}'\right)\right),\forall\mathbf{x},\mathbf{x}'\in\mathbb{R}^l$.
\end{enumerate}
\end{lem}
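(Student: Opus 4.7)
The plan is to prove both implications of the equivalence. The implication from (2) to (1) is the easy direction and I would dispatch it first by a single application of the Cauchy--Schwarz inequality: the right-hand side of (2) is at most $L\|\mathbf{x}-\mathbf{x}'\|_2\,\|\nabla h(\mathbf{x})-\nabla h(\mathbf{x}')\|_2$, so after dividing through by $\|\nabla h(\mathbf{x})-\nabla h(\mathbf{x}')\|_2$ (the degenerate case where this norm vanishes is trivial) the Lipschitz bound of (1) drops out. Convexity of $h$ is not needed here.

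The content sits in the reverse implication (1) $\Rightarrow$ (2), the classical co-coercivity of the gradient (Baillon--Haddad). My plan is the standard auxiliary-function argument. Fix any $\mathbf{x},\mathbf{x}'\in\mathbb{R}^l$ and introduce $\phi(\mathbf{z})=h(\mathbf{z})-\nabla h(\mathbf{x})^\mathsf{T}\mathbf{z}$. Then $\phi$ inherits convexity from $h$, $\nabla\phi$ is again $L$-Lipschitz, and $\nabla\phi(\mathbf{x})=\mathbf{0}$; convexity then promotes $\mathbf{x}$ from a stationary point to a global minimizer. Next I would invoke the descent lemma
\begin{equation*}
\phi(\mathbf{u})\leq\phi(\mathbf{v})+\nabla\phi(\mathbf{v})^\mathsf{T}(\mathbf{u}-\mathbf{v})+\frac{L}{2}\|\mathbf{u}-\mathbf{v}\|_2^2,
\end{equation*}
which is a one-line consequence of $L$-Lipschitz gradient (integrate along the segment from $\mathbf{v}$ to $\mathbf{u}$ and apply Cauchy--Schwarz). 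Setting $\mathbf{v}=\mathbf{x}'$ and picking the minimizer of the quadratic upper bound, $\mathbf{u}=\mathbf{x}'-\frac{1}{L}\nabla\phi(\mathbf{x}')$, while using the global-minimum property $\phi(\mathbf{x})\leq\phi(\mathbf{u})$, produces
\begin{equation*}
\phi(\mathbf{x})\leq\phi(\mathbf{x}')-\frac{1}{2L}\|\nabla\phi(\mathbf{x}')\|_2^2.
\end{equation*}
Unpacking $\phi$ turns this into $h(\mathbf{x})-h(\mathbf{x}')-\nabla h(\mathbf{x})^\mathsf{T}(\mathbf{x}-\mathbf{x}')\leq -\frac{1}{2L}\|\nabla h(\mathbf{x}')-\nabla h(\mathbf{x})\|_2^2$. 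Running the symmetric argument with $\mathbf{x}$ and $\mathbf{x}'$ interchanged and adding the two inequalities cancels all function values of $h$ and leaves exactly the inequality in (2).

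The main obstacle is conceptual rather than computational: one must recognize that convexity is indispensable (it is what upgrades the stationary point $\nabla\phi(\mathbf{x})=\mathbf{0}$ to a global minimizer of $\phi$, without which the chain of bounds collapses), and one must pick the step size $1/L$ so that the descent-lemma bound is tight. Once these two design choices are in place, the remainder is bookkeeping. The descent lemma itself can either be cited as a textbook fact or proved in two lines via $h(\mathbf{u})-h(\mathbf{v})-\nabla h(\mathbf{v})^\mathsf{T}(\mathbf{u}-\mathbf{v})=\int_0^1(\nabla h(\mathbf{v}+t(\mathbf{u}-\mathbf{v}))-\nabla h(\mathbf{v}))^\mathsf{T}(\mathbf{u}-\mathbf{v})\,dt$ followed by Cauchy--Schwarz and the Lipschitz bound.
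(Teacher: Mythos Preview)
Your proof is correct and complete: the $(2)\Rightarrow(1)$ direction is indeed a one-line Cauchy--Schwarz argument, and your $(1)\Rightarrow(2)$ direction via the shifted function $\phi(\mathbf{z})=h(\mathbf{z})-\nabla h(\mathbf{x})^\mathsf{T}\mathbf{z}$ together with the descent lemma is the standard Baillon--Haddad argument, carried out without gaps.

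As for comparison with the paper: there is nothing to compare. The paper does not prove this lemma at all; it simply states it as a known fact from convex analysis and cites a reference (Vandenberghe's gradient-methods notes). So your proposal supplies strictly more than the paper does on this point. If anything, one could remark that your write-up makes explicit where convexity enters (namely, in promoting the stationary point of $\phi$ to a global minimizer), which is a useful observation since the $(2)\Rightarrow(1)$ direction holds without convexity while $(1)\Rightarrow(2)$ genuinely requires it.
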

Utilizing the Lemma \ref{lemA} and Lemma \ref{lemB}, we immediately have the following result.
\begin{lem}\label{lemE}
If Assumptions 1 and 2 hold, we have:
\begin{align}
&\left\|\nabla f(\mathbf{x})-\nabla f\left(\mathbf{x}'\right)\right\|_2^2\leq L\left(\mathbf{x}-\mathbf{x}'\right)^\mathsf{T}\left(\nabla f(\mathbf{x})-\nabla f\left(\mathbf{x}'\right)\right),\forall\mathbf{x},\mathbf{x}'\in\mathbb{R}^{np},\\
&\left\|\nabla g(\mathbf{w})-\nabla g\left(\mathbf{w}'\right)\right\|_2^2\leq M\left(\mathbf{w}-\mathbf{w}'\right)^\mathsf{T}\left(\nabla g(\mathbf{w})-\nabla g\left(\mathbf{w}'\right)\right),\forall\mathbf{w},\mathbf{w}'\in\mathbb{R}^{np+mp},
\end{align}
where $M$ is defined in Lemma \ref{lemA}.
\end{lem}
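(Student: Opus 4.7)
The plan is to derive Lemma \ref{lemE} as an immediate corollary of Lemma \ref{lemA} and Lemma \ref{lemB}. The chain of reasoning is a two-step composition: Lemma \ref{lemA} supplies Lipschitz continuity of $\nabla f$ (with constant $L$) and of $\nabla g$ (with constant $M$), and Lemma \ref{lemB} converts Lipschitz continuity of the gradient of a differentiable convex function into exactly the co-coercivity-type inequality appearing in the statement. Both conclusions of Lemma \ref{lemE} will be obtained by specializing Lemma \ref{lemB} with $h=f$ at constant $L$ and $h=g$ at constant $M$, respectively.

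The one hypothesis of Lemma \ref{lemB} that is not delivered by Lemma \ref{lemA} is convexity of $h$, so I would first dispose of that. By Assumption 1, each $f_i$ is convex, and $f(\mathbf{x}) = \sum_{i=1}^n f_i(\mathbf{x}_i)$ is a sum of convex functions applied to disjoint blocks of $\mathbf{x}$, hence convex on $\mathbb{R}^{np}$. Analogously, each $g_{ij}$ is convex under Assumption 1, and $g(\mathbf{w}) = \sum_{i=1}^n \sum_{j \in \Omega_i} g_{ij}(\mathbf{y}_i,\mathbf{z}_{ij})$ is a sum of convex functions of appropriate sub-blocks of $\mathbf{w} = [\mathbf{y}^\mathsf{T},\mathbf{z}^\mathsf{T}]^\mathsf{T}$, hence convex on $\mathbb{R}^{np+mp}$. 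Differentiability of $f$ and $g$ is implicit in Assumption 2, which presupposes existence of $\nabla f_i$ and $\nabla g_{ij}$.

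With convexity and Lipschitz gradients in hand for both $f$ and $g$, I would simply invoke Lemma \ref{lemB} twice: once with $(h,L)$ replaced by $(f,L)$ to obtain the first inequality, and once with $(h,L)$ replaced by $(g,M)$ to obtain the second. There is no genuine obstacle in this derivation; the non-trivial work has already been done in Lemma \ref{lemA} (in particular, in tracking how the network degree $K$ inflates the Lipschitz constant of $\nabla g$ to $M=\sqrt{L^2K^2+L^2K}$) and in the standard convex-analysis fact recorded as Lemma \ref{lemB}. Lemma \ref{lemE} is essentially a convenient repackaging of these two facts in the co-coercive form that will be directly invoked in the convergence proofs of DLADMM that follow.
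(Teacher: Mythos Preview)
Your proposal is correct and matches the paper's approach exactly: the paper states Lemma~\ref{lemE} as an immediate consequence of Lemma~\ref{lemA} and Lemma~\ref{lemB} without further elaboration, and your write-up simply makes explicit the convexity check (from Assumption~1) needed to invoke Lemma~\ref{lemB}.
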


\subsection{Convergence}

We define a diagonal positive definite matrix $\mathbf{\Lambda}$ as:
\begin{align}
\mathbf{\Lambda}=
\left[
\begin{array}{ccc}
\frac{c}{2}\mathbf{I}_{np} & &\\
&\frac{\rho+c}{2}\mathbf{I}_{np+mp}&\\
&&\frac{1}{2\rho}\mathbf{I}_{np+mp}
\end{array}
\right].
\end{align}
For ease of notation, we define $\mathbf{u}\in\mathbb{R}^{3np+2mp}$ to be the concatenation of $\mathbf{x,w,}\boldsymbol{\alpha}$ into a single column vector and similarly for $\mathbf{u}^k,\mathbf{u}^*$. Since $\mathbf{\Lambda}$ is a positive definite matrix, we can further define a norm on $\mathbb{R}^{3np+2mp}$ as: $\|\mathbf{u}\|_\mathbf{\Lambda}=\sqrt{\mathbf{u}^\mathsf{T}\mathbf{\Lambda u}}$. We have the following result.
\begin{prop}
Suppose Assumptions 1 and 2 hold. Then, for any primal/dual optimal point of problem \eqref{admm_formulation_2} $\mathbf{u}^*=\left[\mathbf{x}^{*\mathsf{T}},\mathbf{w}^{*\mathsf{T}},\boldsymbol{\alpha}^{*\mathsf{T}}\right]^\mathsf{T}$, the sequence $\mathbf{u}^k=\left[\mathbf{x}^{k\mathsf{T}},\mathbf{w}^{k\mathsf{T}},\boldsymbol{\alpha}^{k\mathsf{T}}\right]^\mathsf{T}$ generated by the DLADMM algorithm satisfies $\forall k\geq0$:
\begin{align}
\begin{split}
\left\|\mathbf{u}^{k+1}-\mathbf{u}^*\right\|_\mathbf{\Lambda}^2\leq&\left\|\mathbf{u}^k-\mathbf{u}^*\right\|_\mathbf{\Lambda}^2-\left(\frac{c}{2}-\frac{L}{4}\right)\left\|\mathbf{x}^k-\mathbf{x}^{k+1}\right\|_2^2-\left(\frac{c-\rho}{2}-\frac{M}{4}\right)\left\|\mathbf{w}^k-\mathbf{w}^{k+1}\right\|_2^2\\
&-\frac{1}{4\rho}\left\|\boldsymbol{\alpha}^{k+1}-\boldsymbol{\alpha}^k\right\|_2^2.
\end{split}
\end{align}
\end{prop}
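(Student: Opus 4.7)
The plan is to extract a Lyapunov-type descent identity by testing the DLADMM primal optimality conditions against the optimality gaps $\mathbf{x}^{k+1}-\mathbf{x}^*$ and $\mathbf{w}^{k+1}-\mathbf{w}^*$, and then to upgrade this identity to the claimed inequality by invoking co-coercivity (Lemma~\ref{lemE}) together with Young's inequality applied at a carefully chosen parameter.

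First, I would use the dual update \eqref{alpha_dladmm_c} to eliminate $\boldsymbol{\alpha}^k$ from the primal first-order conditions \eqref{x_dladmm_c} and \eqref{w_dladmm_c}, arriving at the streamlined forms
\begin{equation}
\nabla f(\mathbf{x}^k)+c(\mathbf{x}^{k+1}-\mathbf{x}^k)+\mathbf{B}^\mathsf{T}\boldsymbol{\alpha}^{k+1}+\rho\mathbf{B}^\mathsf{T}(\mathbf{w}^{k+1}-\mathbf{w}^k)=\mathbf{0},
\end{equation}
and $\nabla g(\mathbf{w}^k)+c(\mathbf{w}^{k+1}-\mathbf{w}^k)-\boldsymbol{\alpha}^{k+1}=\mathbf{0}$. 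Subtracting the KKT conditions $\nabla f(\mathbf{x}^*)+\mathbf{B}^\mathsf{T}\boldsymbol{\alpha}^*=\mathbf{0}$, $\nabla g(\mathbf{w}^*)=\boldsymbol{\alpha}^*$, and $\mathbf{B}\mathbf{x}^*=\mathbf{w}^*$, then taking inner products with $\mathbf{x}^{k+1}-\mathbf{x}^*$ and $\mathbf{w}^{k+1}-\mathbf{w}^*$ respectively before summing, I would exploit the primal residual identity $\mathbf{B}(\mathbf{x}^{k+1}-\mathbf{x}^*)-(\mathbf{w}^{k+1}-\mathbf{w}^*)=(\boldsymbol{\alpha}^{k+1}-\boldsymbol{\alpha}^k)/\rho$ to collapse the $\boldsymbol{\alpha}^{k+1}-\boldsymbol{\alpha}^*$ coupling into $\tfrac{1}{\rho}(\boldsymbol{\alpha}^{k+1}-\boldsymbol{\alpha}^*)^\mathsf{T}(\boldsymbol{\alpha}^{k+1}-\boldsymbol{\alpha}^k)$, and to decompose $\rho(\mathbf{w}^{k+1}-\mathbf{w}^k)^\mathsf{T}\mathbf{B}(\mathbf{x}^{k+1}-\mathbf{x}^*)$ as $\rho(\mathbf{w}^{k+1}-\mathbf{w}^k)^\mathsf{T}(\mathbf{w}^{k+1}-\mathbf{w}^*)+(\mathbf{w}^{k+1}-\mathbf{w}^k)^\mathsf{T}(\boldsymbol{\alpha}^{k+1}-\boldsymbol{\alpha}^k)$.

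Next, applying the three-point identity $2(a-b)^\mathsf{T}(a-c)=\|a-b\|_2^2+\|a-c\|_2^2-\|b-c\|_2^2$ to the three structurally analogous inner products carrying coefficients $c$, $c+\rho$, and $1/\rho$ --- which are exactly the diagonal entries of $\mathbf{\Lambda}$ --- the ``telescoping'' pieces assemble into $\|\mathbf{u}^{k+1}-\mathbf{u}^*\|_\mathbf{\Lambda}^2-\|\mathbf{u}^k-\mathbf{u}^*\|_\mathbf{\Lambda}^2$, while the ``self'' pieces contribute $\tfrac{c}{2}\|\mathbf{x}^{k+1}-\mathbf{x}^k\|_2^2+\tfrac{c+\rho}{2}\|\mathbf{w}^{k+1}-\mathbf{w}^k\|_2^2+\tfrac{1}{2\rho}\|\boldsymbol{\alpha}^{k+1}-\boldsymbol{\alpha}^k\|_2^2$ on the descent side. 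The two leftover gradient-difference inner products are handled by splitting $\mathbf{x}^{k+1}-\mathbf{x}^*=(\mathbf{x}^k-\mathbf{x}^*)+(\mathbf{x}^{k+1}-\mathbf{x}^k)$: Lemma~\ref{lemE} gives $\tfrac{1}{L}\|\nabla f(\mathbf{x}^k)-\nabla f(\mathbf{x}^*)\|_2^2$ on the first piece, and Young's inequality with parameter $L/2$ on the cross piece cancels this contribution while donating precisely $-\tfrac{L}{4}\|\mathbf{x}^{k+1}-\mathbf{x}^k\|_2^2$; the $g$-term is treated identically with $M$ in place of $L$.

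The hard part will be the remaining coupling $(\mathbf{w}^{k+1}-\mathbf{w}^k)^\mathsf{T}(\boldsymbol{\alpha}^{k+1}-\boldsymbol{\alpha}^k)$, which must be dispatched in such a way that the final coefficients line up exactly with the target. Young's inequality with the specific parameter $\epsilon=1/(2\rho)$ is the right move and yields $|(\mathbf{w}^{k+1}-\mathbf{w}^k)^\mathsf{T}(\boldsymbol{\alpha}^{k+1}-\boldsymbol{\alpha}^k)|\leq\rho\|\mathbf{w}^{k+1}-\mathbf{w}^k\|_2^2+\tfrac{1}{4\rho}\|\boldsymbol{\alpha}^{k+1}-\boldsymbol{\alpha}^k\|_2^2$. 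The $\rho\|\mathbf{w}^{k+1}-\mathbf{w}^k\|_2^2$ contribution trims the native $\tfrac{c+\rho}{2}$ coefficient down to the required $\tfrac{c-\rho}{2}$, and the $\tfrac{1}{4\rho}\|\boldsymbol{\alpha}^{k+1}-\boldsymbol{\alpha}^k\|_2^2$ contribution halves the native $\tfrac{1}{2\rho}$ down to the required $\tfrac{1}{4\rho}$. Gathering all the estimates and rearranging then produces the stated inequality.
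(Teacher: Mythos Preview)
Your proposal is correct and follows essentially the same approach as the paper's Appendix~B: subtracting the KKT conditions from the DLADMM optimality relations, applying the three-point identity (the paper's Lemma~\ref{lemF}) to produce the $\|\cdot\|_\mathbf{\Lambda}$ telescoping, invoking co-coercivity (Lemma~\ref{lemE}) together with Young's inequality to control the gradient-difference cross terms, and disposing of the residual $(\mathbf{w}^{k+1}-\mathbf{w}^k)^\mathsf{T}(\boldsymbol{\alpha}^{k+1}-\boldsymbol{\alpha}^k)$ coupling via Young with parameter $1/(2\rho)$. Your early substitution $\boldsymbol{\alpha}^k\mapsto\boldsymbol{\alpha}^{k+1}$ in the primal conditions is a mild streamlining that collapses the paper's separate manipulations \eqref{c3}--\eqref{c5} into a single step, but the ingredients and logic are the same.
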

\begin{proof}
The proof is given in Appendix B.
\end{proof}

Now, we are ready to state our first main theorem of convergence.
\begin{thm}
Suppose Assumptions 1,2 hold and $c>\frac{M}{2}+\rho$. Then, the sequence $\mathbf{u}^k$ generated by the DLADMM algorithm converges to some primal/dual optimal point of problem \eqref{admm_formulation_2}, i.e., there exists a primal/dual optimal point of problem \eqref{admm_formulation_2} $\mathbf{u}^*$ such that $\lim_{k\rightarrow\infty}\mathbf{u}^k=\mathbf{u}^*$.
\end{thm}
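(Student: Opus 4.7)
The plan is to exploit Proposition 1 as a Fejér-type monotonicity inequality and then perform a standard subsequence-plus-monotonicity argument to upgrade to full convergence.

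First, I would note that under the hypothesis $c>\frac{M}{2}+\rho$, and because $M\geq L$ (from the formula $M=\sqrt{L^2K^2+L^2K}$), both coefficients $\frac{c}{2}-\frac{L}{4}$ and $\frac{c-\rho}{2}-\frac{M}{4}$ appearing in Proposition 1 are strictly positive. Therefore, fixing any primal/dual optimal point $\mathbf{u}^*$ (which exists since the problem is convex and admits an optimum under Assumption 1), Proposition 1 shows that $\|\mathbf{u}^k-\mathbf{u}^*\|_{\boldsymbol\Lambda}^2$ is non-increasing in $k$, hence bounded, and therefore $\{\mathbf{u}^k\}$ is a bounded sequence in $\mathbb{R}^{3np+2mp}$. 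Telescoping the same inequality yields
\begin{align}
\sum_{k=0}^{\infty}\left\|\mathbf{x}^{k+1}-\mathbf{x}^k\right\|_2^2<\infty,\quad \sum_{k=0}^{\infty}\left\|\mathbf{w}^{k+1}-\mathbf{w}^k\right\|_2^2<\infty,\quad \sum_{k=0}^{\infty}\left\|\boldsymbol{\alpha}^{k+1}-\boldsymbol{\alpha}^k\right\|_2^2<\infty,
\end{align}
so the consecutive differences of $\mathbf{x}^k$, $\mathbf{w}^k$, and $\boldsymbol{\alpha}^k$ all vanish in norm.

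Second, by Bolzano–Weierstrass the bounded sequence $\{\mathbf{u}^k\}$ admits a convergent subsequence $\mathbf{u}^{k_j}\to\mathbf{u}^\infty=[\mathbf{x}^{\infty\mathsf{T}},\mathbf{w}^{\infty\mathsf{T}},\boldsymbol{\alpha}^{\infty\mathsf{T}}]^\mathsf{T}$. Because the consecutive differences go to zero, $\mathbf{u}^{k_j+1}$ also converges to $\mathbf{u}^\infty$. I would then verify that $\mathbf{u}^\infty$ satisfies the KKT conditions of \eqref{admm_formulation_2}. Primal feasibility $\mathbf{B}\mathbf{x}^\infty-\mathbf{w}^\infty=\mathbf{0}$ follows from the dual update \eqref{alpha_dladmm_c} and $\boldsymbol{\alpha}^{k+1}-\boldsymbol{\alpha}^k\to\mathbf{0}$. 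Passing to the limit in the first-order optimality relations \eqref{x_dladmm_c} and \eqref{w_dladmm_c}, using continuity of $\nabla f$, $\nabla g$ (Lemma \ref{lemA}), the vanishing of $\mathbf{x}^{k+1}-\mathbf{x}^k$ and $\mathbf{w}^{k+1}-\mathbf{w}^k$, and the feasibility just established, I obtain
\begin{align}
\nabla f(\mathbf{x}^\infty)+\mathbf{B}^\mathsf{T}\boldsymbol{\alpha}^\infty=\mathbf{0},\qquad \nabla g(\mathbf{w}^\infty)-\boldsymbol{\alpha}^\infty=\mathbf{0},
\end{align}
which together with $\mathbf{B}\mathbf{x}^\infty=\mathbf{w}^\infty$ are exactly the KKT conditions for \eqref{admm_formulation_2}. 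Hence $\mathbf{u}^\infty$ is a primal/dual optimal point.

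Finally, I would upgrade subsequential convergence to full convergence by reapplying Proposition 1 with $\mathbf{u}^*$ chosen to be this particular limit point $\mathbf{u}^\infty$. The proposition then shows $\|\mathbf{u}^k-\mathbf{u}^\infty\|_{\boldsymbol\Lambda}$ is monotonically non-increasing in $k$, and since the subsequence $\mathbf{u}^{k_j}$ converges to $\mathbf{u}^\infty$ (driving this quantity along a subsequence to zero), monotonicity forces $\|\mathbf{u}^k-\mathbf{u}^\infty\|_{\boldsymbol\Lambda}\to 0$, i.e.\ $\mathbf{u}^k\to\mathbf{u}^\infty$. The main obstacle I anticipate is the clean passage to the limit in the optimality condition \eqref{x_dladmm_c}, where the term $\rho(\mathbf{B}^\mathsf{T}\mathbf{B}\mathbf{x}^{k+1}-\mathbf{B}^\mathsf{T}\mathbf{w}^k)$ must be shown to vanish at the limit; this relies on combining the feasibility obtained from the dual update with the vanishing of $\mathbf{w}^{k+1}-\mathbf{w}^k$, which is a bit subtle but routine.
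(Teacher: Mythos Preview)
Your proposal is correct and follows the same overall strategy as the paper: use Proposition~1 to obtain Fej\'er-type monotonicity and vanishing consecutive differences, extract a subsequential limit, verify it is a KKT point by passing to the limit in \eqref{x_dladmm_c}, \eqref{w_dladmm_c}, \eqref{alpha_dladmm_c}, and then upgrade to full convergence. The one place where you diverge is this last step. The paper introduces a separate lemma (Lemma~\ref{lemC}) showing that any two subsequential limits of $\{\mathbf{u}^k\}$ must coincide---proved by comparing the limits of $\|\mathbf{u}^k-\underline{\mathbf{u}}\|_\mathbf{\Lambda}^2$ and $\|\mathbf{u}^k-\overline{\mathbf{u}}\|_\mathbf{\Lambda}^2$ along the two subsequences---and then concludes by contradiction. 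Your route is the more classical Opial-type argument: once $\mathbf{u}^\infty$ is known to be optimal, reapply Proposition~1 with $\mathbf{u}^*=\mathbf{u}^\infty$, so that $\|\mathbf{u}^k-\mathbf{u}^\infty\|_\mathbf{\Lambda}$ is nonincreasing and tends to zero along a subsequence, hence tends to zero altogether. Both arguments are valid; yours is shorter and avoids the auxiliary lemma, while the paper's version has the by-product of explicitly establishing uniqueness of cluster points before identifying the limit. The subtlety you flag about $\rho(\mathbf{B}^\mathsf{T}\mathbf{B}\mathbf{x}^{k+1}-\mathbf{B}^\mathsf{T}\mathbf{w}^k)$ is dispatched in the paper exactly as you suggest, via $\mathbf{B}\mathbf{x}^{k+1}-\mathbf{w}^k=(\mathbf{B}\mathbf{x}^{k+1}-\mathbf{w}^{k+1})+(\mathbf{w}^{k+1}-\mathbf{w}^k)\to\mathbf{0}$.
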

\begin{proof}
Given any primal/dual optimal point of problem \eqref{admm_formulation_2} $\mathbf{u}^*$, according to Proposition 1 and $c>\frac{M}{2}+\rho$, we know that $\left\|\mathbf{u}^k-\mathbf{u}^*\right\|_\mathbf{\Lambda}^2$ is a decreasing sequence. Since it is clearly lower bounded by 0, we have that $\left\|\mathbf{u}^k-\mathbf{u}^*\right\|_\mathbf{\Lambda}^2$ is convergent. From Proposition 1, we further deduce that:
\begin{align}
\label{5}0&\leq\left(\frac{c}{2}-\frac{L}{4}\right)\left\|\mathbf{x}^k-\mathbf{x}^{k+1}\right\|_2^2+\left(\frac{c-\rho}{2}-\frac{M}{4}\right)\left\|\mathbf{w}^k-\mathbf{w}^{k+1}\right\|_2^2+\frac{1}{4\rho}\left\|\boldsymbol{\alpha}^{k+1}-\boldsymbol{\alpha}^k\right\|_2^2\\
&\leq\left\|\mathbf{u}^k-\mathbf{u}^*\right\|_\mathbf{\Lambda}^2-\left\|\mathbf{u}^{k+1}-\mathbf{u}^*\right\|_\mathbf{\Lambda}^2.\label{6}
\end{align}
Because $\left\|\mathbf{u}^k-\mathbf{u}^*\right\|_\mathbf{\Lambda}^2$ is convergent, we know \eqref{6} converges to zero as $k$ goes to infinity. Hence, \eqref{5} converges to zero as well. Therefore,
\begin{align}
&\lim_{k\rightarrow\infty}\left(\mathbf{x}^k-\mathbf{x}^{k+1}\right)=\mathbf{0},\\
\label{www}&\lim_{k\rightarrow\infty}\left(\mathbf{w}^k-\mathbf{w}^{k+1}\right)=\mathbf{0},\\
&\lim_{k\rightarrow\infty}\left(\boldsymbol{\alpha}^k-\boldsymbol{\alpha}^{k+1}\right)=\mathbf{0}.
\end{align}
Substituting the above limits into \eqref{x_dladmm_c}, \eqref{w_dladmm_c} and \eqref{alpha_dladmm_c} yields:
\begin{align}
\label{b1}&\lim_{k\rightarrow\infty}\left[\nabla f\left(\mathbf{x}^k\right)+\mathbf{B}^\mathsf{T}\boldsymbol{\alpha}^k+\rho\left(\mathbf{B}^\mathsf{T}\mathbf{Bx}^{k+1}-\mathbf{B}^\mathsf{T}\mathbf{w}^k\right)\right]=\mathbf{0},\\
\label{b2}&\lim_{k\rightarrow\infty}\left[\nabla g\left(\mathbf{w}^k\right)-\boldsymbol{\alpha}^k+\rho\left(\mathbf{w}^{k+1}-\mathbf{Bx}^{k+1}\right)\right]=\mathbf{0},\\
\label{b3}&\lim_{k\rightarrow\infty}\left(\mathbf{Bx}^{k+1}-\mathbf{w}^{k+1}\right)=\mathbf{0}.
\end{align}
Equation \eqref{b3} clearly implies:
\begin{align}\label{d1}
\lim_{k\rightarrow\infty}\left(\mathbf{Bx}^k-\mathbf{w}^k\right)=\mathbf{0}.
\end{align}
Combining \eqref{b2} and \eqref{b3} leads to:
\begin{align}\label{d2}
\lim_{k\rightarrow\infty}\left[\nabla g\left(\mathbf{w}^k\right)-\boldsymbol{\alpha}^k\right]=\mathbf{0}.
\end{align}
Moreover, from \eqref{b3} and \eqref{www}, we obtain:
\begin{align}
\label{7}\mathbf{Bx}^{k+1}-\mathbf{w}^k=\mathbf{Bx}^{k+1}-\mathbf{w}^{k+1}+\mathbf{w}^{k+1}-\mathbf{w}^k\rightarrow\mathbf{0},\text{as}~k\rightarrow\infty.
\end{align}
Combining \eqref{7} and \eqref{b1}, we get:
\begin{align}\label{d3}
\lim_{k\rightarrow\infty}\left[\nabla f\left(\mathbf{x}^k\right)+\mathbf{B}^\mathsf{T}\boldsymbol{\alpha}^k\right]=\mathbf{0}.
\end{align}
Since $\forall k:\left\|\mathbf{u}^k-\mathbf{u}^*\right\|_\mathbf{\Lambda}\leq\left\|\mathbf{u}^0-\mathbf{u}^*\right\|_\mathbf{\Lambda}$, we know that $\left\{\mathbf{u}^k\right\}_{k=0,1,...}$ is a bounded sequence. So, it has convergent subsequence, which is denoted as $\left\{\mathbf{u}^{k_i}\right\}_{i=1,2,...}$. Let $\widehat{\mathbf{u}}$ be the limit of this convergent subsequence, i.e., $\lim_{i\rightarrow\infty}\mathbf{u}^{k_i}=\widehat{\mathbf{u}}$. Equations \eqref{d1}, \eqref{d2} and \eqref{d3} are still satisfied along the subsequence $\left\{\mathbf{u}^{k_i}\right\}_{i=1,2,...}$ and hence,
\begin{align}
&\lim_{i\rightarrow\infty}\left(\mathbf{Bx}^{k_i}-\mathbf{w}^{k_i}\right)=\mathbf{0},\\
&\lim_{i\rightarrow\infty}\left[\nabla g\left(\mathbf{w}^{k_i}\right)-\boldsymbol{\alpha}^{k_i}\right]=\mathbf{0},\\
&\lim_{i\rightarrow\infty}\left[\nabla f\left(\mathbf{x}^{k_i}\right)+\mathbf{B}^\mathsf{T}\boldsymbol{\alpha}^{k_i}\right]=\mathbf{0}.
\end{align}
Making use of the convergence of the subsequence $\left\{\mathbf{u}^{k_i}\right\}_{i=1,2,...}$ to $\widehat{\mathbf{u}}$, we obtain:
\begin{align}
&\mathbf{B}\widehat{\mathbf{x}}-\widehat{\mathbf{w}}=\mathbf{0},\\
&\nabla g\left(\widehat{\mathbf{w}}\right)-\widehat{\boldsymbol{\alpha}}=\mathbf{0},\\
&\nabla f\left(\widehat{\mathbf{x}}\right)+\mathbf{B}^\mathsf{T}\widehat{\boldsymbol{\alpha}}=\mathbf{0}.
\end{align}
These are the KKT conditions of problem \eqref{admm_formulation_2}. So $\widehat{\mathbf{u}}$ is a primal/dual optimal point of problem \eqref{admm_formulation_2}. In the following, we endeavor to show that the sequence $\mathbf{u}^k$ converges to $\widehat{\mathbf{u}}$. Before that, we first prove a lemma.
\begin{lem}\label{lemC}
If the sequence $\left\{\mathbf{u}^k\right\}_{k=0,1,...}$ has two subsequences $\left\{\mathbf{u}^{k_i}\right\}_{i=1,2,...}$ and $\left\{\mathbf{u}^{k_i'}\right\}_{i=1,2,...}$ converging to $\underline{\mathbf{u}}$ and $\overline{\mathbf{u}}$, respectively, then $\underline{\mathbf{u}}=\overline{\mathbf{u}}$.
\end{lem}
\begin{proof}
The proof is given in Appendix C.
\end{proof}

Now, we show that $\mathbf{u}^k$ converges to $\widehat{\mathbf{u}}$ by making use of Lemma \ref{lemC}. Suppose, on the contrary, $\mathbf{u}^k$ does not converge to $\widehat{\mathbf{u}}$. Then, there exists some positive $\epsilon$, such that for any positive integer $N$, there exists some $k\geq N$ with $\left\|\mathbf{u}^k-\widehat{\mathbf{u}}\right\|_2\geq\epsilon$. Thus, letting $N=1$, we get some $\tilde{k}_1\geq 1$ with $\left\|\mathbf{u}^{\tilde{k}_1}-\widehat{\mathbf{u}}\right\|_2\geq\epsilon$. Letting $N=\tilde{k}_1+1$, we get some $\tilde{k}_2\geq \tilde{k}_1+1$ with $\left\|\mathbf{u}^{\tilde{k}_2}-\widehat{\mathbf{u}}\right\|_2\geq\epsilon$. Continuing this process, we obtain a subsequence $\left\{\mathbf{u}^{\tilde{k}_i}\right\}_{i=1,2,...}$ such that $\left\|\mathbf{u}^{\tilde{k}_i}-\widehat{\mathbf{u}}\right\|_2\geq\epsilon,\forall i$. The subsequence $\left\{\mathbf{u}^{\tilde{k}_i}\right\}_{i=1,2,...}$ is bounded as the original sequence $\left\{\mathbf{u}^k\right\}_{k=0,1,...}$ is bounded. As such, the subsequence $\left\{\mathbf{u}^{\tilde{k}_i}\right\}_{i=1,2,...}$ has a convergent sub-subsequence $\left\{\mathbf{u}^{\tilde{k}_{i_j}}\right\}_{j=1,2,...}$. Denote the limit of this convergent sub-subsequence as $\widetilde{\mathbf{u}}$, i.e., $\lim_{j\rightarrow\infty}\mathbf{u}^{\tilde{k}_{i_j}}=\widetilde{\mathbf{u}}$. Obviously, $\left\|\widetilde{\mathbf{u}}-\widehat{\mathbf{u}}\right\|_2\geq\epsilon$. But, according to Lemma \ref{lemC}, we should have $\widetilde{\mathbf{u}}=\widehat{\mathbf{u}}$. This is a contradiction. So, we must have $\lim_{k\rightarrow\infty}\mathbf{u}^k=\widehat{\mathbf{u}}$. Note that we have previously shown that $\widehat{\mathbf{u}}$ is a primal/dual optimal point of problem \eqref{admm_formulation_2}. We hence conclude the theorem.
\end{proof}

\subsection{Linear Rate of Convergence}
With the strong convexity assumption, we can further guarantee linear convergence rate of the DLADMM algorithm. Before formally stating this result, we first show an implication of Assumption 3.
\begin{lem}\label{lemD}
If Assumption 3 holds, then $f$ and $g$ are both strongly convex with constant $\tau$.
\end{lem}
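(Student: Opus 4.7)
The plan is to exploit the separable structure of both $f$ and $g$ and reduce the statement to the per-node/per-edge strong convexity already in Assumption 3, with a small bit of counting to handle the fact that each $\mathbf{y}_i$ appears in several edge terms of $g$.

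First I would dispatch $f$. Because $f(\mathbf{x}) = \sum_{i=1}^n f_i(\mathbf{x}_i)$ is block-separable, $\nabla f(\mathbf{x})$ is the concatenation of the $\nabla f_i(\mathbf{x}_i)$. Consequently
\begin{align*}
\bigl(\nabla f(\mathbf{x}) - \nabla f(\mathbf{x}')\bigr)^{\mathsf{T}}(\mathbf{x}-\mathbf{x}')
= \sum_{i=1}^n \bigl(\nabla f_i(\mathbf{x}_i) - \nabla f_i(\mathbf{x}_i')\bigr)^{\mathsf{T}}(\mathbf{x}_i-\mathbf{x}_i')
\ge \tau \sum_{i=1}^n \|\mathbf{x}_i-\mathbf{x}_i'\|_2^2 = \tau\|\mathbf{x}-\mathbf{x}'\|_2^2,
\end{align*}
where the inequality is Assumption 3(a) applied to each summand, and the last equality is just the block decomposition of the Euclidean norm.

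Next I would handle $g$. Since $g(\mathbf{w})=\sum_{i=1}^n\sum_{j\in\Omega_i}g_{ij}(\mathbf{y}_i,\mathbf{z}_{ij})$, the partial gradients are $\nabla_{\mathbf{y}_i}g=\sum_{j\in\Omega_i}\nabla_{\mathbf{y}_i}g_{ij}(\mathbf{y}_i,\mathbf{z}_{ij})$ and $\nabla_{\mathbf{z}_{ij}}g=\nabla_{\mathbf{z}_{ij}}g_{ij}(\mathbf{y}_i,\mathbf{z}_{ij})$. Expanding $(\nabla g(\mathbf{w})-\nabla g(\mathbf{w}'))^{\mathsf{T}}(\mathbf{w}-\mathbf{w}')$ and grouping terms by edge $(i,j)$ with $j\in\Omega_i$ lets me rewrite the whole inner product as a sum of per-edge inner products of the stacked vectors $[\mathbf{y}_i;\mathbf{z}_{ij}]$ and their gradient counterparts. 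Applying Assumption 3(b) to each edge then yields
\begin{align*}
\bigl(\nabla g(\mathbf{w})-\nabla g(\mathbf{w}')\bigr)^{\mathsf{T}}(\mathbf{w}-\mathbf{w}')
\;\ge\; \tau\sum_{i=1}^n\sum_{j\in\Omega_i}\Bigl(\|\mathbf{y}_i-\mathbf{y}_i'\|_2^2+\|\mathbf{z}_{ij}-\mathbf{z}_{ij}'\|_2^2\Bigr).
\end{align*}

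The only subtlety, and the place where I would have to be most careful, is the bookkeeping between this bound and $\|\mathbf{w}-\mathbf{w}'\|_2^2=\|\mathbf{y}-\mathbf{y}'\|_2^2+\|\mathbf{z}-\mathbf{z}'\|_2^2$. The $\mathbf{z}$ part is immediate: $\sum_i\sum_{j\in\Omega_i}\|\mathbf{z}_{ij}-\mathbf{z}_{ij}'\|_2^2=\|\mathbf{z}-\mathbf{z}'\|_2^2$. For the $\mathbf{y}$ part, $\mathbf{y}_i$ is counted $|\Omega_i|$ times, giving $\sum_i|\Omega_i|\,\|\mathbf{y}_i-\mathbf{y}_i'\|_2^2$, and since every node in the problem has at least one neighbor (otherwise $\mathbf{y}_i$ would not appear in $g$ at all), $|\Omega_i|\ge 1$ and this sum dominates $\|\mathbf{y}-\mathbf{y}'\|_2^2$. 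Combining the two lower bounds gives $\tau\|\mathbf{w}-\mathbf{w}'\|_2^2$, which is exactly strong convexity of $g$ with constant $\tau$. The chief hurdle is really only this degree-counting step; beyond that the statement follows from routine block-separability arguments.
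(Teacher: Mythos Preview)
Your proposal is correct and matches the paper's proof essentially line for line: both treat $f$ by block-separability with Assumption~3(a), and both handle $g$ by regrouping the inner product as a sum over edges $(i,j)$, applying Assumption~3(b) edgewise, and then using $|\Omega_i|\ge 1$ to pass from $\sum_i|\Omega_i|\|\mathbf{y}_i-\mathbf{y}_i'\|_2^2$ down to $\|\mathbf{y}-\mathbf{y}'\|_2^2$. The only difference is that you make the degree condition $|\Omega_i|\ge 1$ explicit, whereas the paper leaves it implicit in the final inequality.
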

\begin{proof}
The proof is given in Appendix D.
\end{proof}

The strong convexity of $f$ and $g$ implies that there exists a unique primal/dual optimal point $\mathbf{u}^*$ for problem \eqref{admm_formulation_2}. Denote the spectral norm (maximum singular value) of $\mathbf{B}$ as $\Gamma$. Now, we are ready to state our second main theorem regarding linear convergence rate.

\begin{thm}
Suppose Assumptions 2,3 hold and $c>\max\left\{\frac{L^2}{2\tau},\rho+\frac{M^2}{2\tau}\right\}$. Then, $\forall k$:
\begin{align}
\left\|\mathbf{u}^{k+1}-\mathbf{u}^*\right\|_\mathbf{\Lambda}^2\leq\frac{1}{1+\delta}\left\|\mathbf{u}^k-\mathbf{u}^*\right\|_\mathbf{\Lambda}^2.\label{linear_convergence}
\end{align}
In \eqref{linear_convergence}, $\delta>0$ is a positive constant defined as:
\begin{align}\label{delta_def}
\delta=\min\left\{\frac{\tau-\frac{L^2}{2c}}{\frac{c}{2}+\frac{3\rho\mu\Gamma^2}{\mu-1}},\frac{\tau-\frac{\beta}{2}}{\frac{c+\rho}{2}+\frac{3\rho\mu}{\mu-1}+\frac{2M^2\mu}{\rho}},\frac{1}{4}\right\},
\end{align}
where $\beta\in\left(\frac{M^2}{c-\rho},2\tau\right)$ is the solution of the equation:
\begin{align}\label{beta_def}
\frac{\tau-\frac{\beta}{2}}{\frac{c+\rho}{2}+\frac{3\rho\mu}{\mu-1}+\frac{2M^2\mu}{\rho}}=\frac{\frac{c-\rho}{2}-\frac{M^2}{2\beta}}{\frac{3c^2\mu}{\rho(\mu-1)}+\frac{2M^2\mu}{\rho}},
\end{align}
and $\mu>1$ is any constant greater than 1.
\end{thm}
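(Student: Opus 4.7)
The strategy is to upgrade Proposition 1 from a monotonic-descent inequality into a $Q$-linear contraction. Proposition 1 already gives $\|\mathbf{u}^{k+1}-\mathbf{u}^*\|_\mathbf{\Lambda}^2 \le \|\mathbf{u}^k-\mathbf{u}^*\|_\mathbf{\Lambda}^2 - P_k$, where $P_k\ge 0$ is a quadratic ``progress'' form in the per-step differences $\mathbf{x}^{k+1}-\mathbf{x}^k$, $\mathbf{w}^{k+1}-\mathbf{w}^k$, $\boldsymbol{\alpha}^{k+1}-\boldsymbol{\alpha}^k$. It therefore suffices to establish $P_k \ge \delta\|\mathbf{u}^{k+1}-\mathbf{u}^*\|_\mathbf{\Lambda}^2$, since then \eqref{linear_convergence} follows by simple rearrangement. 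The additional ingredients available are strong convexity of $f,g$ (Lemma \ref{lemD}), Lipschitz continuity of $\nabla f,\nabla g$ (Lemma \ref{lemA}), and the first-order conditions \eqref{x_dladmm_c}--\eqref{alpha_dladmm_c} together with the KKT conditions of \eqref{admm_formulation_2}.

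First I would extract clean identities linking the step-$(k{+}1)$ errors to the per-step residuals. Combining \eqref{w_dladmm_c} with \eqref{alpha_dladmm_c} gives $\boldsymbol{\alpha}^{k+1} = \nabla g(\mathbf{w}^k) + c(\mathbf{w}^{k+1}-\mathbf{w}^k)$, so subtracting the KKT relation $\boldsymbol{\alpha}^* = \nabla g(\mathbf{w}^*)$ yields $\boldsymbol{\alpha}^{k+1}-\boldsymbol{\alpha}^* = [\nabla g(\mathbf{w}^k)-\nabla g(\mathbf{w}^*)] + c(\mathbf{w}^{k+1}-\mathbf{w}^k)$. Analogously, eliminating $\boldsymbol{\alpha}^k$ from \eqref{x_dladmm_c} via \eqref{alpha_dladmm_c} and using $\nabla f(\mathbf{x}^*) + \mathbf{B}^\mathsf{T}\boldsymbol{\alpha}^* = 0$ produces $\nabla f(\mathbf{x}^{k+1})-\nabla f(\mathbf{x}^*) = [\nabla f(\mathbf{x}^{k+1})-\nabla f(\mathbf{x}^k)] - c(\mathbf{x}^{k+1}-\mathbf{x}^k) - \mathbf{B}^\mathsf{T}(\boldsymbol{\alpha}^{k+1}-\boldsymbol{\alpha}^*) - \rho\mathbf{B}^\mathsf{T}(\mathbf{w}^{k+1}-\mathbf{w}^k)$. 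Plugging both identities into the strong-convexity bounds $\tau\|\mathbf{x}^{k+1}-\mathbf{x}^*\|_2^2 \le (\nabla f(\mathbf{x}^{k+1})-\nabla f(\mathbf{x}^*))^\mathsf{T}(\mathbf{x}^{k+1}-\mathbf{x}^*)$ and its $g$-counterpart converts the $\tau$-scaled error norms into inner products involving per-step residuals. The two cross terms $-(\mathbf{x}^{k+1}-\mathbf{x}^*)^\mathsf{T}\mathbf{B}^\mathsf{T}(\boldsymbol{\alpha}^{k+1}-\boldsymbol{\alpha}^*)$ and $(\mathbf{w}^{k+1}-\mathbf{w}^*)^\mathsf{T}(\boldsymbol{\alpha}^{k+1}-\boldsymbol{\alpha}^*)$ combine, via $\mathbf{B}\mathbf{x}^* = \mathbf{w}^*$ and $\mathbf{B}\mathbf{x}^{k+1}-\mathbf{w}^{k+1} = \rho^{-1}(\boldsymbol{\alpha}^{k+1}-\boldsymbol{\alpha}^k)$, to leave only a $\rho^{-1}\|\boldsymbol{\alpha}^{k+1}-\boldsymbol{\alpha}^k\|_2^2$ contribution, which $P_k$ already controls.

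The remaining cross terms contain the gradient differences $\nabla f(\mathbf{x}^{k+1})-\nabla f(\mathbf{x}^k)$ and $\nabla g(\mathbf{w}^{k+1})-\nabla g(\mathbf{w}^k)$, which Lemma \ref{lemA} bounds by $L\|\mathbf{x}^{k+1}-\mathbf{x}^k\|_2$ and $M\|\mathbf{w}^{k+1}-\mathbf{w}^k\|_2$ respectively. I would split each with Young's inequality using two free positive parameters: one parameter $\mu>1$ for the $\mathbf{B}^\mathsf{T}$-coupled terms (where $\|\mathbf{B}\|_2=\Gamma$ enters), and a second parameter $\beta\in(M^2/(c-\rho),2\tau)$ for the $\nabla g$ splitting. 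The dual error $\|\boldsymbol{\alpha}^{k+1}-\boldsymbol{\alpha}^*\|_2^2$ is bounded independently by applying Young's inequality to the $\boldsymbol{\alpha}$-identity above together with Lemmas \ref{lemA} and \ref{lemE}. Collecting coefficients produces an inequality of the shape $A_\mathbf{x}\|\mathbf{x}^{k+1}-\mathbf{x}^*\|_2^2 + A_\mathbf{w}\|\mathbf{w}^{k+1}-\mathbf{w}^*\|_2^2 + A_{\boldsymbol{\alpha}}\|\boldsymbol{\alpha}^{k+1}-\boldsymbol{\alpha}^*\|_2^2 \le P_k$, and the three ratios $A_\mathbf{x}/(c/2)$, $A_\mathbf{w}/((c+\rho)/2)$, and $A_{\boldsymbol{\alpha}}/(1/(2\rho))$ are designed to match the three entries of the $\min$ in \eqref{delta_def}. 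The balance equation \eqref{beta_def} is the condition that the second and third ratios coincide, selecting the optimal $\beta$; the hypothesis $c > \max\{L^2/(2\tau),\rho + M^2/(2\tau)\}$ is exactly what makes $A_\mathbf{x}$ and $A_\mathbf{w}$ strictly positive.

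The main obstacle is the careful algebraic bookkeeping: every Young split must absorb its ``bad'' half into $P_k$ while leaving a positive coefficient on the step-$(k{+}1)$ error norms. Existence of $\beta$ in the stated open interval follows from a monotonicity and sign-change argument on the two sides of \eqref{beta_def}, which in turn leans on the assumed lower bound on $c$. Once the coefficients are tuned to the form of \eqref{delta_def}, the contraction $P_k \ge \delta\|\mathbf{u}^{k+1}-\mathbf{u}^*\|_\mathbf{\Lambda}^2$ combined with Proposition 1 yields $(1+\delta)\|\mathbf{u}^{k+1}-\mathbf{u}^*\|_\mathbf{\Lambda}^2 \le \|\mathbf{u}^k-\mathbf{u}^*\|_\mathbf{\Lambda}^2$, which is precisely \eqref{linear_convergence}.
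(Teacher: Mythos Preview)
Your proposal is essentially correct and follows the same route as the paper: strong convexity of $f,g$ to control $\|\mathbf{x}^{k+1}-\mathbf{x}^*\|_2^2$ and $\|\mathbf{w}^{k+1}-\mathbf{w}^*\|_2^2$, the optimality identity obtained by combining \eqref{w_dladmm_c} with \eqref{alpha_dladmm_c} to control $\|\boldsymbol{\alpha}^{k+1}-\boldsymbol{\alpha}^*\|_2^2$, Young splits with the free parameters $\mu$ and $\beta$, and then a coefficient comparison against the $\mathbf{\Lambda}$-norm to extract $\delta$.

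Two minor points of divergence worth flagging. First, the paper does not invoke Proposition~1 itself but rather its intermediate inequality \eqref{3}, which has the full coefficients $\frac{c}{2}$ and $\frac{c-\rho}{2}$ on the step differences instead of $\frac{c}{2}-\frac{L}{4}$ and $\frac{c-\rho}{2}-\frac{M}{4}$; this sharper form is what makes the stated constants in \eqref{delta_def} come out exactly. Second, for the dual block the paper does not use your clean identity $\boldsymbol{\alpha}^{k+1}-\boldsymbol{\alpha}^*=\nabla g(\mathbf{w}^k)-\nabla g(\mathbf{w}^*)+c(\mathbf{w}^{k+1}-\mathbf{w}^k)$ directly; instead it bounds $\|\boldsymbol{\alpha}^k-\boldsymbol{\alpha}^*\|_2^2$ via \eqref{gg} and the inequality $\|\mathbf{a}+\mathbf{b}\|_2^2\ge\frac{1}{\mu}\|\mathbf{b}\|_2^2-\frac{1}{\mu-1}\|\mathbf{a}\|_2^2$, then passes to $\|\boldsymbol{\alpha}^{k+1}-\boldsymbol{\alpha}^*\|_2^2$ by the crude split $\le 2\|\boldsymbol{\alpha}^{k+1}-\boldsymbol{\alpha}^k\|_2^2+2\|\boldsymbol{\alpha}^k-\boldsymbol{\alpha}^*\|_2^2$. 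This is where the terms $3\rho\mu\Gamma^2/(\mu-1)$, $3\rho\mu/(\mu-1)$, and $3c^2\mu/(\rho(\mu-1))$ in \eqref{delta_def}--\eqref{beta_def} originate. Your route would in fact yield a somewhat cleaner (and possibly tighter) $\delta$, but if the goal is to reproduce the exact expression in the statement, you should follow the paper's bound. Also, your claim that the two $(\boldsymbol{\alpha}^{k+1}-\boldsymbol{\alpha}^*)$ cross terms ``leave only a $\rho^{-1}\|\boldsymbol{\alpha}^{k+1}-\boldsymbol{\alpha}^k\|_2^2$ contribution'' is slightly off: they combine to $-\rho^{-1}(\boldsymbol{\alpha}^{k+1}-\boldsymbol{\alpha}^k)^\mathsf{T}(\boldsymbol{\alpha}^{k+1}-\boldsymbol{\alpha}^*)$, which still carries an $\boldsymbol{\alpha}^*$-dependence that must be handled (the paper absorbs this through the three-point identity of Lemma~\ref{lemF} inside \eqref{3}).
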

\begin{proof}
The proof is in Appendix E.

\end{proof}

\begin{rem}
The constant $\delta$ determining the convergence rate of the DLADMM depends on the local cost functions ($L,M,\tau$), the network topology ($\Gamma$) as well as the algorithm parameters ($\rho,c$). This sheds some light on how to tune the parameters to achieve better convergence speed in practice. Furthermore, we note that the Theorem 2 only provides a sufficient condition for linear convergence of the DLADMM. In later numerical experiments, we will see that even when the assumptions of Theorem 2 is violated (e.g., the local cost functions are not strongly convex), the DLADMM algorithm may still converge in linear rate.
\end{rem}

\section{Numerical Experiments}

In this section, numerical results are presented to corroborate the effectiveness of the proposed DLADMM algorithm. In particular, we consider the problem of distributed logistic regression. Suppose each node $i$ has a training set of $q$ training examples $\{\mathbf{u}_{il},t_{il}\}_{l=1,...,q}$, where $\mathbf{u}_{il}\in\mathbb{R}^p$ is the input feature vector and $t_{il}\in\{-1,1\}$ is the corresponding output label. Logistic regression model postulates that, for node $i$, the probability of the output $t_i$ given the input $\mathbf{u}_i$ is $\Pr(t_i=1|\mathbf{u}_i)=\frac{1}{1+\exp\left\{-\mathbf{u}_i^\mathsf{T}\mathbf{x}_i\right\}}$, where $\mathbf{x}_i$ is the classifier for node $i$. Our goal is to estimate the classifiers of all nodes and thus, together with a decision threshold, we can achieve a input-output mapping at each node. Moreover, we note that neighbor nodes tend to have similar classifiers. Incorporating this prior knowledge into the maximum likelihood estimator of the logistic regression yields the following optimization problem:
\begin{align}
\text{Minimize}_{\{\mathbf{x}_i\}_{i=1,...,n}}~~\sum_{i=1}^n\sum_{l=1}^q\log\left(1+\exp\left(-t_{il}\mathbf{u}_{il}^\mathsf{T}\mathbf{x}_i\right)\right)+\beta\sum_{i=1}^n\sum_{j\in\Omega_i}\|\mathbf{x}_i-\mathbf{x}_j\|_2^2.\label{logistic}
\end{align}
The problem \eqref{logistic} is clearly in the form of \eqref{NCM} with:
\begin{align}
&f_i(\mathbf{x}_i)=\sum_{l=1}^q\log\left(1+\exp\left(-t_{il}\mathbf{u}_{il}^\mathsf{T}\mathbf{x}_i\right)\right),\\
&g_{ij}(\mathbf{x}_i,\mathbf{x}_j)=\beta\|\mathbf{x}_i-\mathbf{x}_j\|_2^2.
\end{align}
We note that $f_i$ and $g_{ij}$ are all convex, i.e., they satisfy Assumption 1. In addition, $\nabla f_i$ is Lipschitz continuous with constant $\frac{1}{4}\sum_{l=1}^q\|\mathbf{u}_{il}\|_2^2$ and $\nabla g_{ij}$ is Lipschitz continuous with constant $4\beta$. So, Assumption 2 holds with $L=\max\left\{4\beta,\frac{1}{4}\max_{i=1,...,n}\sum_{l=1}^q\|\mathbf{u}_{il}\|_2^2\right\}$. Thus, Theorem 1 can be applied with appropriate algorithm parameters and convergence of the DLADMM algorithm is guaranteed theoretically. Moreover, though neither $f_i$ nor $g_{ij}$ is strongly convex (Assumption 3 does not hold), we can still empirically observe linear convergence of the DLADMM in later experiments.

\subsection{Comparison between the DLADMM and the DADMM}
We first conduct an experiment to compare the performance of the DLADMM and the DADMM algorithms. We consider two scenarios: (i) a random network with $n=10$ nodes; the dimension of each data sample is $p=2$; and each node has $q=50$ data samples; (ii) a random network with $n=30$ nodes; the dimension of each data instance is $p=5$; and each node has $q=10$ data samples. The average degree of the network is 2. The ADMM algorithm parameter is set to be $\rho=50$ and the linearization parameter is $c=3$ in scenario (i) and $c=5$ in scenario (ii). In Fig. \ref{comparison}, we compare the relative errors $\frac{\left\|\mathbf{x}^k-\mathbf{x}^*\right\|_2}{\left\|\mathbf{x}^*\right\|_2}$ ($\mathbf{x}^*$ is the optimal point of \eqref{logistic} obtained by solving the centralized optimization problem with the CVX package \cite{cvx,gb08}) of the DADMM algorithm and the DLADMM algorithm. We observe that the convergence curve of the DLADMM algorithm is very close to that of the DADMM algorithm in both scenarios. Both the DADMM and the DLADMM converge linearly to the optimal point. However, the computational complexity of the DLADMM is much lower than that of the DADMM. It takes several hours for the DADMM to finish 400 iterations while the DLADMM only needs about 5 seconds to finish the same number of iterations. The reason is that, for each node, each iteration of the DADMM necessitates solving a local optimization problem containing log functions, which must be approximated iteratively. Thus, each iteration of the DADMM is carried out very slowly. On the contrary, each iteration of the DLADMM only involves direct closed-form computations, which can be implemented very quickly. This endows the DLADMM with great computational advantage over the DADMM.

\begin{figure}
  \centering
  \includegraphics[scale=.2]{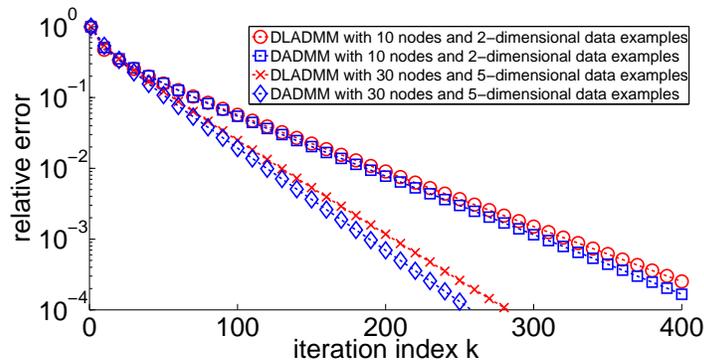}\\
  \caption{Comparison between the DLADMM and DADMM}\label{comparison}
\end{figure}

\subsection{Impact of Network Topology}

Next, we investigate the impact of network topology on the performance of the DLADMM. We set the total number of nodes to be $n=20$ and the algorithm parameters to be $\rho=100,c=50$. We consider four network topologies: the line network, the star network, the complete network and the small-world network. The four network topologies are illustrated in Fig. \ref{topologies}. To obtain small-world network, we first generate a cycle network, and then add 20 random links between them. As its name suggests, in small-world networks, the distance between two nodes, i.e., the length of the shortest path connecting these two nodes, is small. Many properties of real-world networks can be obtained by the small-world networks \cite{watts1998collective}. The convergence curves of the DLADMM on different network topologies are shown in Fig. \ref{topology_comparison}. We observe that the convergence of the small-world network and the star network are faster than that of the line network and complete network. The phenomenon can be explained as follows. For the complete network, the number of constraints in the ADMM formulation of the network cost problem \eqref{admm_formulation}, i.e., the number of nodes plus the number of links, is large. Thus, the number of dual variables at each node is also large, resulting in slow convergence. For the line network, the distance between nodes is generally large, so that information from a node cannot propagate quickly to many distant nodes. This also prohibits the DLADMM from fast convergence. In contrast, for the star network and the small-world network: (i) the distances between nodes are small so that information can be efficiently diffused; (ii) the average degree of nodes is small so that each node only has a small number of dual variables to update, which can converge quickly. Lastly, we remark that though the DLADMM converges at different speeds for different network topologies, it converges linearly to the optimal point in all circumstances.

\begin{figure}
\renewcommand\figurename{\small Fig.}
\centering \vspace*{8pt} \setlength{\baselineskip}{10pt}

\subfigure[Line network]{
\includegraphics[scale = 0.1]{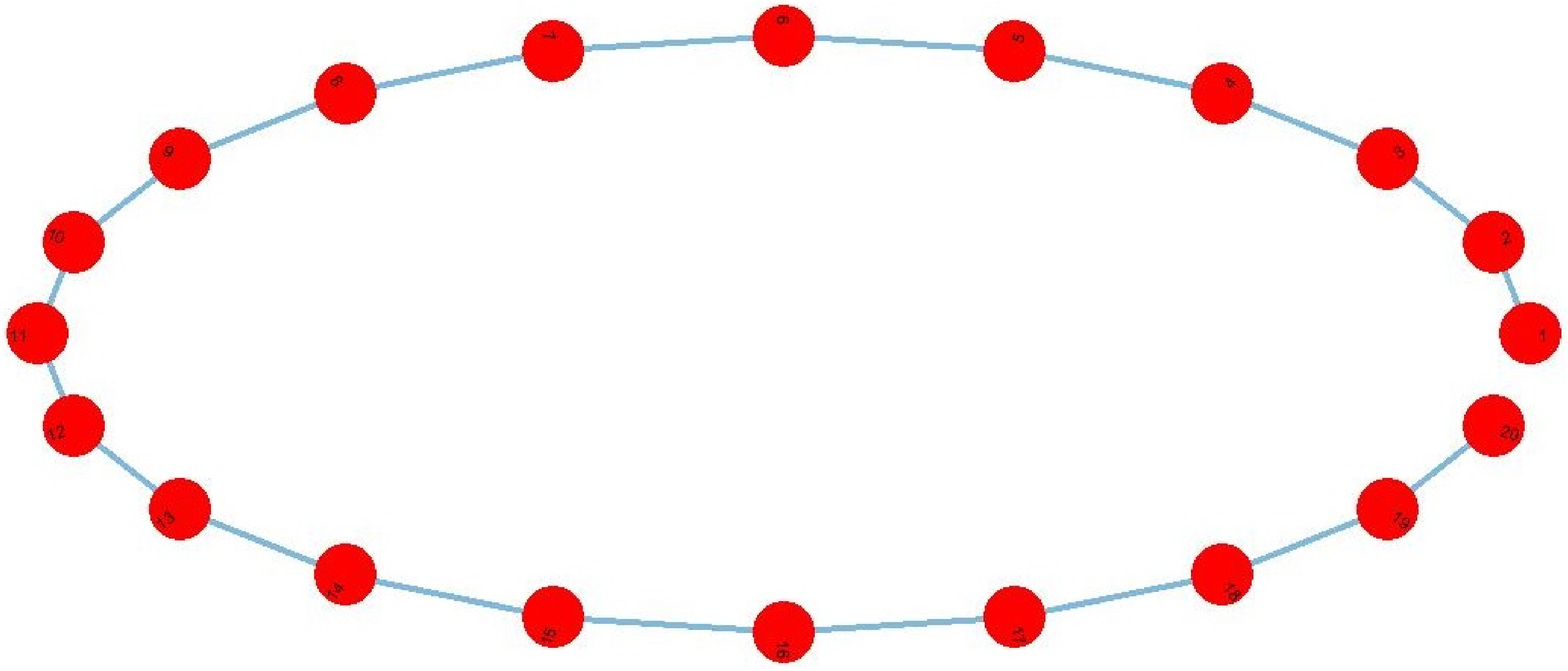}}
\subfigure[Star network]{
\includegraphics[scale = 0.1]{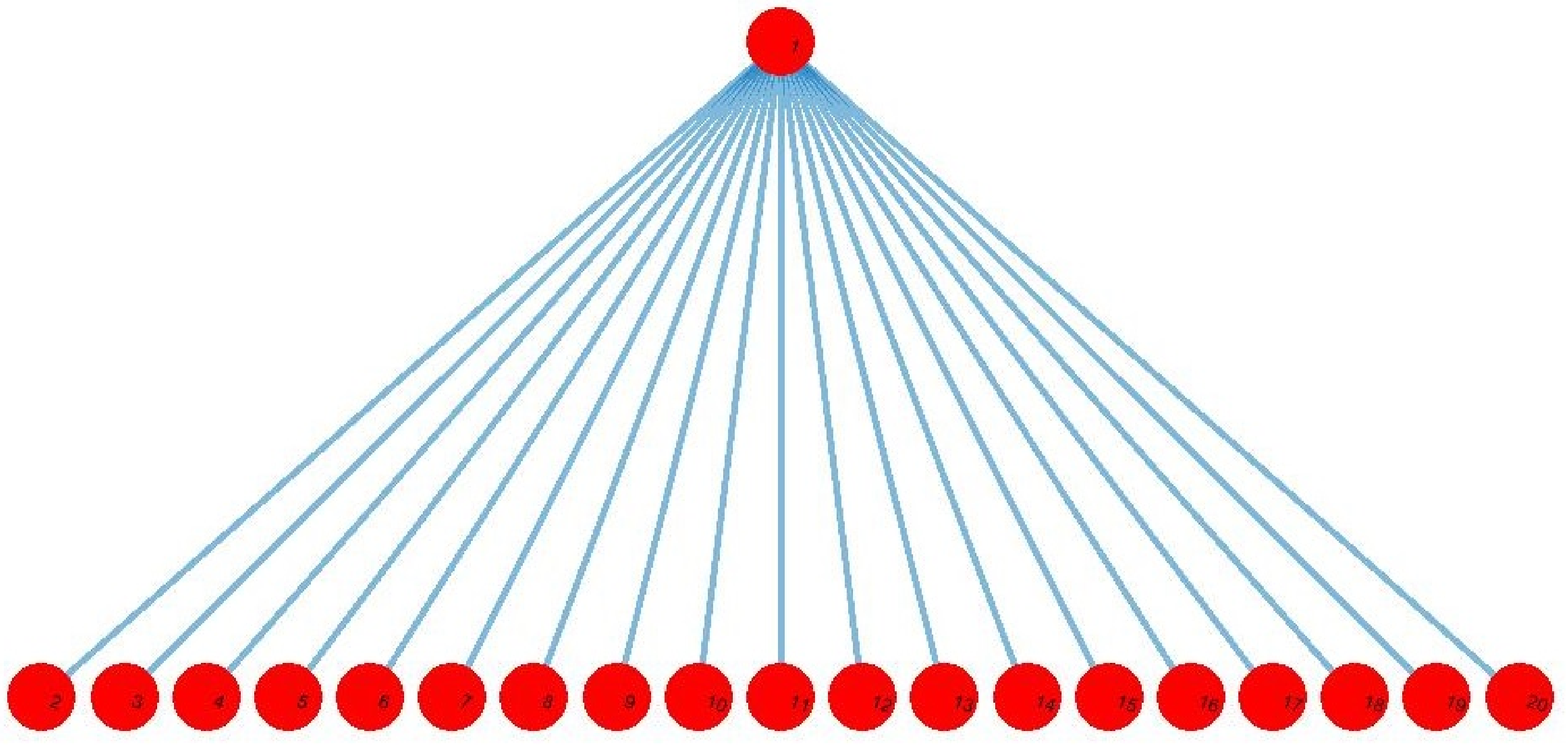}}
\subfigure[Complete network]{
\includegraphics[scale = 0.1]{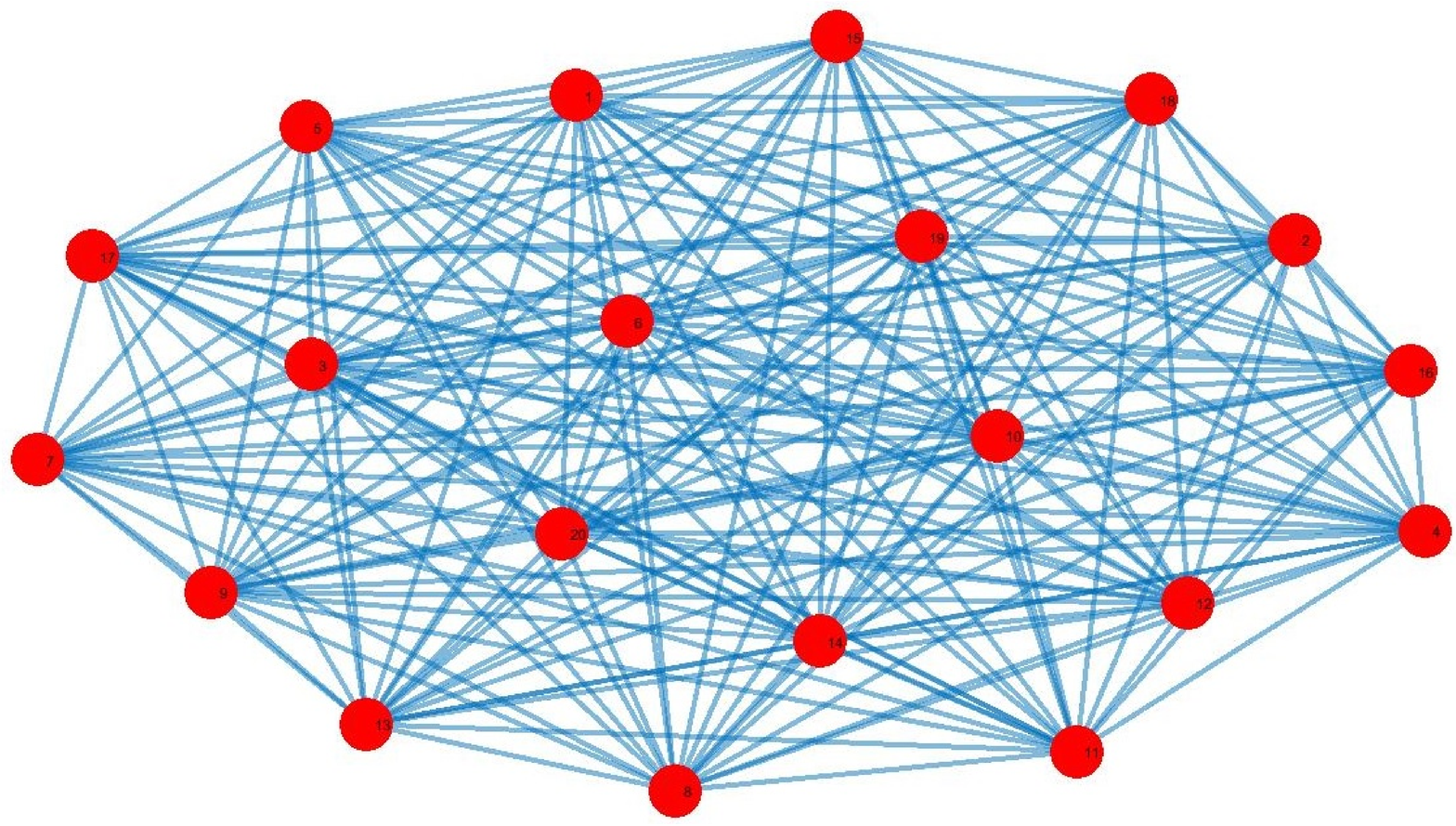}}
\subfigure[Small-world network]{
\includegraphics[scale = 0.1]{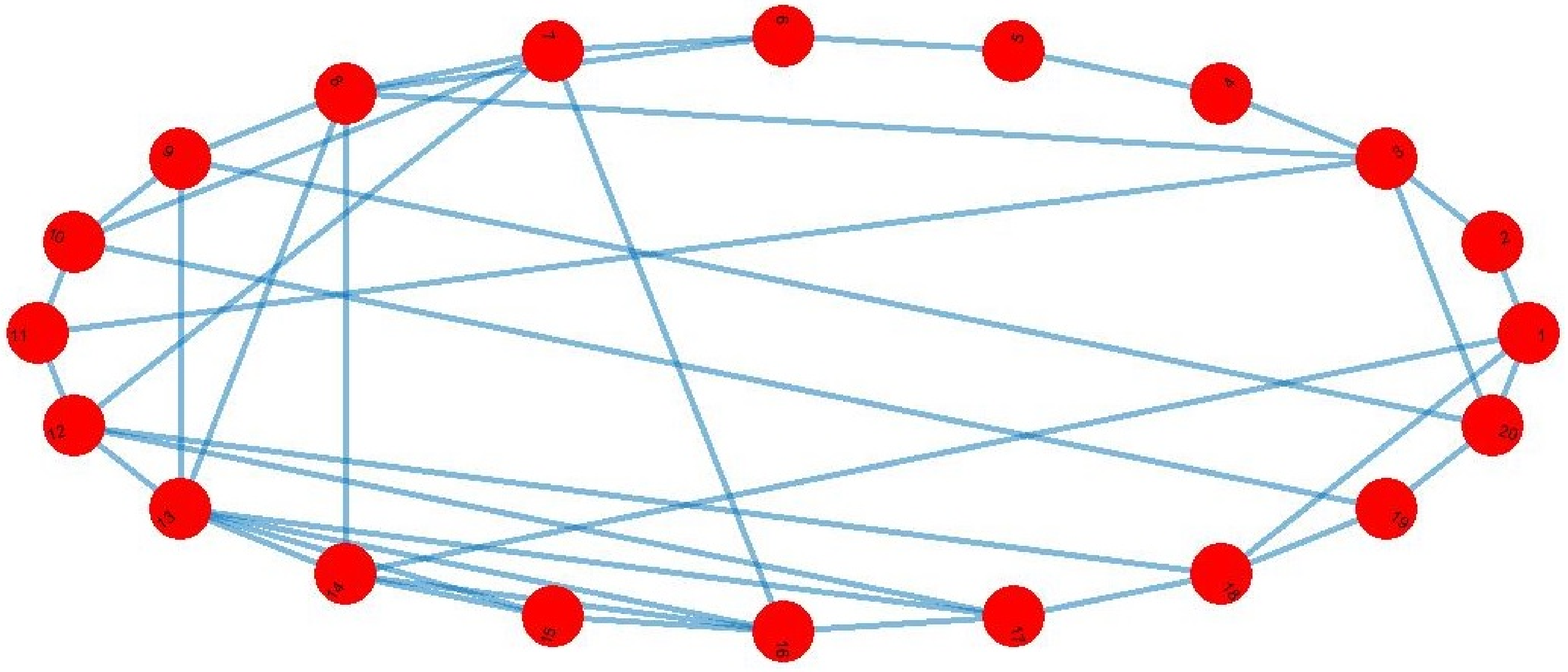}}

\caption{Different network topologies}
\label{topologies}
\end{figure}

\begin{figure}
  \centering
  \includegraphics[scale=.2]{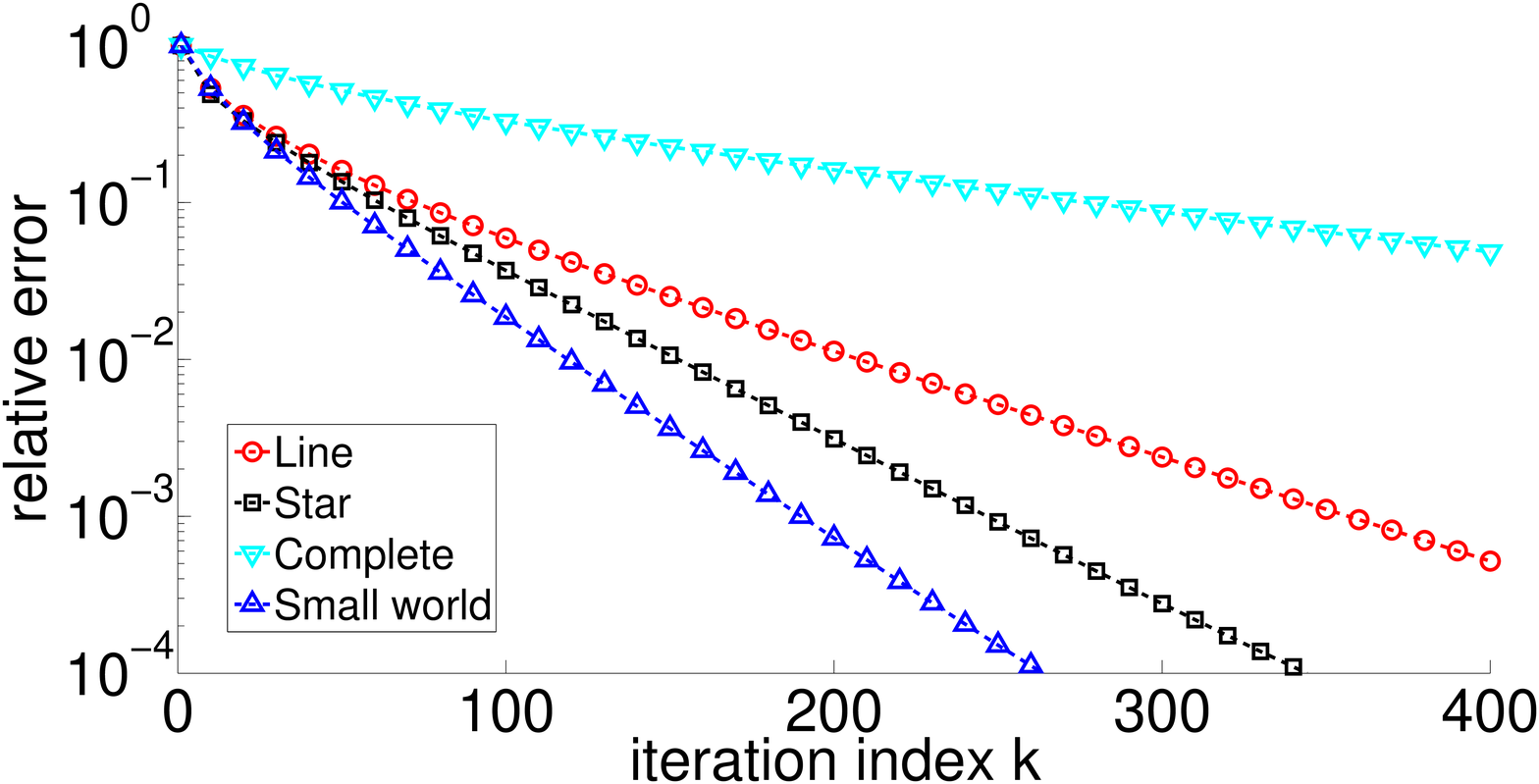}\\
  \caption{Performance of the DLADMM on different network topologies}\label{topology_comparison}
\end{figure}

\subsection{Impact of Network Connectivity}

We further study the impact of network connectivity, measured by the average node degree, on the performance of the DLADMM over small-world networks. To this end, we first form a cycle network and then add different numbers of random links to obtain small-world networks of different average degrees. The convergence curves of the DLADMM algorithm on small-world networks with different average degrees are reported in Fig. \ref{degree}. We observe that the small-world networks with smaller average degree have faster convergence speed. The reason is that for small-world networks, even when the average degree is small (e.g., 3), the distances between nodes are short so that information of one node can spread across the network quickly. Additionally, for small-world network with lower degrees, each node only needs to update a small number of dual variables and thus the convergence is faster. Note that when the average degree is high, the small-world networks become analogous to the complete network, over which the DLADMM converges slowly (Fig. \ref{topology_comparison}).

\begin{figure}
  \centering
  \includegraphics[scale=.2]{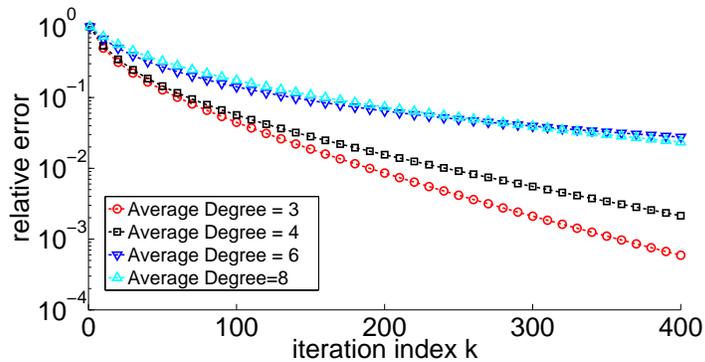}\\
  \caption{Performance of the DLADMM on the small-world networks with different average degrees}\label{degree}
\end{figure}

\subsection{Impact of the Linearization Parameter $c$}

Finally,  in Fig. \ref{impact_of_c}, we study the impact of the linearization parameter $c$ on the convergence of the DLADMM over small-world networks. We observe that as long as the DLADMM converges, the smaller the value of $c$ , the faster the convergence speed. But $c$ cannot be too small, otherwise the DLADMM may diverge, e.g., when $c=1$. Recall that the parameter $c$ is introduced to limit the step size between consecutive iterations and therefore plays a similar role as the step size parameter in numerical optimization \cite{boyd2004convex} and adaptive signal processing \cite{Haykin:1996:AFT:230061}. A general tradeoff for such parameters is that (i) when they are too large, the convergence is slow; (ii) when they are too small, the algorithm risks divergence. We note that similar phenomenon can be observed in Fig. \ref{impact_of_c}.

\begin{figure}
  \centering
  \includegraphics[scale=.2]{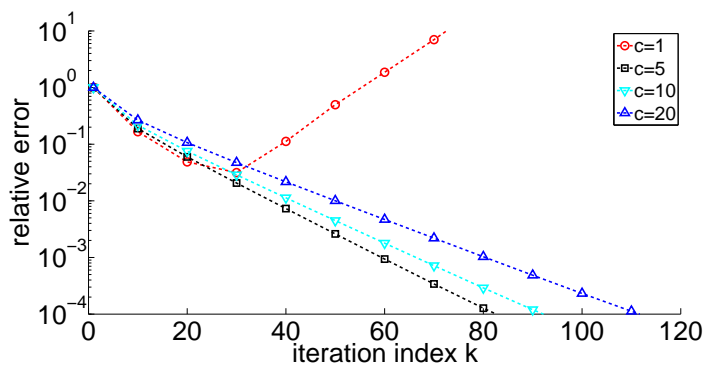}\\
  \caption{Performance of the DLADMM with different values of $c$}\label{impact_of_c}
\end{figure}

\section{Conclusions}
In this paper, we study the generic form of network cost minimization problem, in which the network cost includes both node costs and link costs. The formulated problem has broad applications in distributed signal processing and control over multi-agent networked systems. A distributed linearized ADMM algorithm is presented for the formulated problem. The DLADMM algorithm operates in a decentralized manner and each iteration only involves simple closed-form computations, which endows the DLADMM much lower computational complexity than the distributed ADMM. Under the assumptions that the local cost functions are convex and possess Lipschitz continuous gradients, we show that the DLADMM converges to an optimal point of the network cost minimization problem. By further assuming that the local cost functions are strongly convex, we can guarantee linear convergence rate of the DLADMM. Numerical simulations are carried out to validate the performance of the DLADMM and we empirically observe that the DLADMM has similar convergence performance as DADMM does while the former has much lower computational overhead. The impacts of network topology, connectivity and algorithm parameters on the convergence behaviors of the DLADMM are also discussed.

\section*{Appendix A\\Proof of Lemma \ref{lemA}}
For any $\mathbf{x},\mathbf{x}'\in\mathbb{R}^{np}$:
\begin{align}
\left\|\nabla f(\mathbf{x})-\nabla f\left(\mathbf{x}'\right)\right\|_2^2=\sum_{i=1}^n\left\|\nabla f_i(\mathbf{x}_i)-\nabla f_i\left(\mathbf{x}_i'\right)\right\|_2^2\leq L^2\sum_{i=1}^n\left\|\mathbf{x}_i-\mathbf{x}_i'\right\|_2^2=L^2\left\|\mathbf{x}-\mathbf{x}'\right\|_2^2.
\end{align}
So, $\nabla f$ is Lipschitz continuous with constant $L$. For any $\mathbf{y,y'}\in\mathbb{R}^{np}$ and $\mathbf{z,z'}\in\mathbb{R}^{mp}$:
\begin{align}
&\left\|\nabla g(\mathbf{y,z})-\nabla g(\mathbf{y',z'})\right\|_2^2\\
\label{g}&=\sum_{i=1}^n\left\|\sum_{j\in\Omega_i}\left(\nabla_{\mathbf{y}_i}g_{ij}(\mathbf{y}_i,\mathbf{z}_{ij})-\nabla_{\mathbf{y}_i'}g_{ij}\left(\mathbf{y}_i',\mathbf{z}_{ij}'\right)\right)\right\|_2^2+\sum_{i=1}^n\sum_{j\in\Omega_i}\left\|\nabla_{\mathbf{z}_{ij}}g_{ij}(\mathbf{y}_i,\mathbf{z}_{ij})-\nabla_{\mathbf{z}_{ij}'}g_{ij}\left(\mathbf{y}_i',\mathbf{z}_{ij}'\right)\right\|_2^2
\end{align}
For the first term, we have:
\begin{align}
&\left\|\sum_{j\in\Omega_i}\left(\nabla_{\mathbf{y}_i}g_{ij}(\mathbf{y}_i,\mathbf{z}_{ij})-\nabla_{\mathbf{y}_i'}g_{ij}\left(\mathbf{y}_i',\mathbf{z}_{ij}'\right)\right)\right\|_2^2\\
&\leq\left(\sum_{j\in\Omega_i}\left\|\nabla_{\mathbf{y}_i}g_{ij}(\mathbf{y}_i,\mathbf{z}_{ij})-\nabla_{\mathbf{y}_i'}g_{ij}\left(\mathbf{y}_i',\mathbf{z}_{ij}'\right)\right\|_2\right)^2\\
&\leq L^2\left(\sum_{j\in\Omega_i}
\left\|
\left[
\begin{array}{c}
\mathbf{y}_i\\
\mathbf{z}_{ij}
\end{array}
\right]-
\left[
\begin{array}{c}
\mathbf{y}_i'\\
\mathbf{z}_{ij}'
\end{array}
\right]
\right\|_2
\right)^2\\
\label{cauchy}&\leq L^2|\Omega_i|\sum_{j\in\Omega_i}\left\|
\left[
\begin{array}{c}
\mathbf{y}_i\\
\mathbf{z}_{ij}
\end{array}
\right]-
\left[
\begin{array}{c}
\mathbf{y}_i'\\
\mathbf{z}_{ij}'
\end{array}
\right]
\right\|_2^2\\
&\leq L^2K\sum_{j\in\Omega_i}\left\|
\left[
\begin{array}{c}
\mathbf{y}_i\\
\mathbf{z}_{ij}
\end{array}
\right]-
\left[
\begin{array}{c}
\mathbf{y}_i'\\
\mathbf{z}_{ij}'
\end{array}
\right]
\right\|_2^2,
\end{align}
where we invoke Cauchy's inequality $\left(\sum_{i=1}^Iv_i\right)^2\leq I\sum_{i=1}^Iv_i^2,\forall\mathbf{v}\in\mathbb{R}^I$ to obtain \eqref{cauchy}. Hence,
\begin{align}
\sum_{i=1}^n\left\|\sum_{j\in\Omega_i}\left(\nabla_{\mathbf{y}_i}g_{ij}(\mathbf{y}_i,\mathbf{z}_{ij})-\nabla_{\mathbf{y}_i'}g_{ij}\left(\mathbf{y}_i',\mathbf{z}_{ij}'\right)\right)\right\|_2^2
\leq L^2K^2\left\|
\left[
\begin{array}{c}
\mathbf{y}\\
\mathbf{z}
\end{array}
\right]-
\left[
\begin{array}{c}
\mathbf{y}'\\
\mathbf{z}'
\end{array}
\right]
\right\|_2^2\label{g1}
\end{align}
For the second term of \eqref{g}, we have:
\begin{align}
&\sum_{i=1}^n\sum_{j\in\Omega_i}\left\|\nabla_{\mathbf{z}_{ij}}g_{ij}(\mathbf{y}_i,\mathbf{z}_{ij})-\nabla_{\mathbf{z}_{ij}'}g_{ij}\left(\mathbf{y}_i',\mathbf{z}_{ij}'\right)\right\|_2^2\\
&\leq L^2\sum_{i=1}^n\sum_{j\in\Omega_i}\left\|
\left[
\begin{array}{c}
\mathbf{y}_i\\
\mathbf{z}_{ij}
\end{array}
\right]-
\left[
\begin{array}{c}
\mathbf{y}_i'\\
\mathbf{z}_{ij}'
\end{array}
\right]
\right\|_2^2\\
&\leq L^2K\left\|
\left[
\begin{array}{c}
\mathbf{y}\\
\mathbf{z}
\end{array}
\right]-
\left[
\begin{array}{c}
\mathbf{y}'\\
\mathbf{z}'
\end{array}
\right]
\right\|_2^2\label{g2}
\end{align}
Combining \eqref{g1} and \eqref{g2} yields:
\begin{align}
\left\|\nabla g(\mathbf{y,z})-\nabla g(\mathbf{y',z'})\right\|_2^2\leq\left(L^2K^2+L^2K\right)\left\|
\left[
\begin{array}{c}
\mathbf{y}\\
\mathbf{z}
\end{array}
\right]-
\left[
\begin{array}{c}
\mathbf{y}'\\
\mathbf{z}'
\end{array}
\right]
\right\|_2^2.
\end{align}
So $\nabla g$ is Lipschitz continuous with constant $M=\sqrt{L^2K^2+L^2K}$.

\section*{Appendix B\\Proof of Proposition 1}
According to Assumption 1, problem \eqref{admm_formulation_2} is a convex optimization problem. Thus, the Karush-Kuhn-Tucker (KKT) conditions are necessary and sufficient for optimality. So, the primal/dual optimal point $\mathbf{u}^*$ satisfies the following KKT conditions:
\begin{align}
\label{x_kkt}&\nabla f(\mathbf{x}^*)+\mathbf{B}^\mathsf{T}\boldsymbol{\alpha}^*=\mathbf{0},\\
\label{w_kkt}&\nabla g(\mathbf{w}^*)-\boldsymbol{\alpha}^*=\mathbf{0},\\
\label{feasible}&\mathbf{Bx}^*-\mathbf{w}^*=\mathbf{0}.
\end{align}
Subtracting \eqref{x_kkt} from \eqref{x_dladmm_c} and exploiting \eqref{feasible} yields gives:
\begin{align}\label{ff}
\nabla f\left(\mathbf{x}^k\right)-\nabla f(\mathbf{x}^*)+c\left(\mathbf{x}^{k+1}-\mathbf{x}^k\right)+\mathbf{B}^\mathsf{T}\left(\boldsymbol{\alpha}^k-\boldsymbol{\alpha}^*\right)+\rho\mathbf{B}^\mathsf{T}\mathbf{B}\left(\mathbf{x}^{k+1}-\mathbf{x}^*\right)+\rho\mathbf{B}^\mathsf{T}\left(\mathbf{w}^*-\mathbf{w}^k\right)=\mathbf{0}.
\end{align}
Similarly, subtracting \eqref{w_kkt} from \eqref{w_dladmm_c} and exploiting \eqref{feasible} yields:
\begin{align}\label{gg}
\nabla g\left(\mathbf{w}^k\right)-\nabla g\left(\mathbf{w}^*\right)+c\left(\mathbf{w}^{k+1}-\mathbf{w}^k\right)+\boldsymbol{\alpha}^*-\boldsymbol{\alpha}^k+\rho\left(\mathbf{w}^{k+1}-\mathbf{w}^*\right)+\rho\mathbf{B}\left(\mathbf{x}^*-\mathbf{x}^{k+1}\right)=\mathbf{0}.
\end{align}
Combining \eqref{alpha_dladmm_c} and \eqref{feasible} leads to:
\begin{align}\label{aa}
\boldsymbol{\alpha}^{k+1}-\boldsymbol{\alpha}^k+\rho\mathbf{B}\left(\mathbf{x}^*-\mathbf{x}^{k+1}\right)+\rho\left(\mathbf{w}^{k+1}-\mathbf{w}^*\right)=\mathbf{0}.
\end{align}
According to Lemma \ref{lemE}, we have:
\begin{align}
&\frac{1}{L}\left\|\nabla f\left(\mathbf{x}^k\right)-\nabla f(\mathbf{x}^*)\right\|_2^2\\
&\leq\left(\mathbf{x}^k-\mathbf{x}^*\right)^\mathsf{T}\left(\nabla f\left(\mathbf{x}^k\right)-\nabla f(\mathbf{x}^*)\right)\\
&\label{1}=\left(\mathbf{x}^{k+1}-\mathbf{x}^*\right)^\mathsf{T}\left(\nabla f\left(\mathbf{x}^k\right)-\nabla f(\mathbf{x}^*)\right)+\left(\mathbf{x}^k-\mathbf{x}^{k+1}\right)^\mathsf{T}\left(\nabla f\left(\mathbf{x}^k\right)-\nabla f(\mathbf{x}^*)\right).
\end{align}
For the first term of \eqref{1}, according to \eqref{ff}, we have:
\begin{align}
&\left(\mathbf{x}^{k+1}-\mathbf{x}^*\right)^\mathsf{T}\left(\nabla f\left(\mathbf{x}^k\right)-\nabla f(\mathbf{x}^*)\right)\\
&=\left(\mathbf{x}^{k+1}-\mathbf{x}^*\right)^\mathsf{T}\bigg[c\left(\mathbf{x}^k-\mathbf{x}^{k+1}\right)+\mathbf{B}^\mathsf{T}\left(\boldsymbol{\alpha}^*-\boldsymbol{\alpha}^k\right)+\rho\mathbf{B}^\mathsf{T}\mathbf{B}\left(\mathbf{x}^*-\mathbf{x}^{k+1}\right)+\rho\mathbf{B}^\mathsf{T}\left(\mathbf{w}^{k+1}-\mathbf{w}^*\right)\nonumber\\
&~~~+\rho\mathbf{B}^\mathsf{T}\left(\mathbf{w}^k-\mathbf{w}^{k+1}\right)\bigg]\label{cc1}.
\end{align}
On the other hand, using Lemma \ref{lemE} again, we have:
\begin{align}
&\frac{1}{M}\left\|\nabla g\left(\mathbf{w}^k\right)-\nabla g(\mathbf{w}^*)\right\|_2^2\\
&\leq\left(\mathbf{w}^k-\mathbf{w}^*\right)^\mathsf{T}\left(\nabla g\left(\mathbf{w}^k\right)-\nabla g(\mathbf{w}^*)\right)\\
&\label{2}=\left(\mathbf{w}^{k+1}-\mathbf{w}^*\right)^\mathsf{T}\left(\nabla g\left(\mathbf{w}^k\right)-\nabla g(\mathbf{w}^*)\right)+\left(\mathbf{w}^k-\mathbf{w}^{k+1}\right)^\mathsf{T}\left(\nabla g\left(\mathbf{w}^k\right)-\nabla g(\mathbf{w}^*)\right).
\end{align}
For the first term of \eqref{2}, by using \eqref{gg}, we obtain:
\begin{align}
&\left(\mathbf{w}^{k+1}-\mathbf{w}^*\right)^\mathsf{T}\left(\nabla g\left(\mathbf{w}^k\right)-\nabla g(\mathbf{w}^*)\right)\\
\label{cc2}&=\left(\mathbf{w}^{k+1}-\mathbf{w}^*\right)^\mathsf{T}\left[c\left(\mathbf{w}^k-\mathbf{w}^{k+1}\right)+\boldsymbol{\alpha}^k-\boldsymbol{\alpha}^*+\rho\left(\mathbf{w}^*-\mathbf{w}^{k+1}\right)+\rho\mathbf{B}\left(\mathbf{x}^{k+1}-\mathbf{x}^*\right)\right]
\end{align}
We note the following fact, which shall be used frequently.
\begin{lem}\label{lemF}
For any symmetric matrix $\mathbf{A}\in\mathbb{R}^{N\times N}$ and any vectors $\mathbf{x,y,z}\in\mathbb{R}^N$, we have:
\begin{align}
2(\mathbf{x-y})^\mathsf{T}\mathbf{A(z-x)}=\mathbf{(z-y)}^\mathsf{T}\mathbf{A(z-y)}-\mathbf{(x-y)}^\mathsf{T}\mathbf{A(x-y)}-\mathbf{(z-x)}^\mathsf{T}\mathbf{A(z-x)}.
\end{align}
\end{lem}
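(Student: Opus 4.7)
The plan is to verify the identity by direct algebraic expansion, using the symmetry of $\mathbf{A}$ to combine the cross terms. First I would introduce the shorthand $\mathbf{a}=\mathbf{x}-\mathbf{y}$ and $\mathbf{b}=\mathbf{z}-\mathbf{x}$, so that $\mathbf{a}+\mathbf{b}=\mathbf{z}-\mathbf{y}$. With this notation, the three quadratic forms on the right-hand side become $(\mathbf{a}+\mathbf{b})^\mathsf{T}\mathbf{A}(\mathbf{a}+\mathbf{b})$, $\mathbf{a}^\mathsf{T}\mathbf{A}\mathbf{a}$, and $\mathbf{b}^\mathsf{T}\mathbf{A}\mathbf{b}$, and the left-hand side becomes $2\mathbf{a}^\mathsf{T}\mathbf{A}\mathbf{b}$.

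Next I would expand $(\mathbf{a}+\mathbf{b})^\mathsf{T}\mathbf{A}(\mathbf{a}+\mathbf{b})=\mathbf{a}^\mathsf{T}\mathbf{A}\mathbf{a}+\mathbf{a}^\mathsf{T}\mathbf{A}\mathbf{b}+\mathbf{b}^\mathsf{T}\mathbf{A}\mathbf{a}+\mathbf{b}^\mathsf{T}\mathbf{A}\mathbf{b}$. Subtracting the two diagonal terms $\mathbf{a}^\mathsf{T}\mathbf{A}\mathbf{a}$ and $\mathbf{b}^\mathsf{T}\mathbf{A}\mathbf{b}$ leaves exactly the cross terms $\mathbf{a}^\mathsf{T}\mathbf{A}\mathbf{b}+\mathbf{b}^\mathsf{T}\mathbf{A}\mathbf{a}$. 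Because $\mathbf{A}=\mathbf{A}^\mathsf{T}$, we have $\mathbf{b}^\mathsf{T}\mathbf{A}\mathbf{a}=\mathbf{a}^\mathsf{T}\mathbf{A}\mathbf{b}$, so this sum collapses to $2\mathbf{a}^\mathsf{T}\mathbf{A}\mathbf{b}=2(\mathbf{x}-\mathbf{y})^\mathsf{T}\mathbf{A}(\mathbf{z}-\mathbf{x})$, matching the left-hand side and completing the verification.

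There is really no serious obstacle here; the identity is a standard three-point (polarization-style) equality, and the whole argument is a one-line expansion after substitution. The only point worth flagging in the write-up is the invocation of the symmetry of $\mathbf{A}$ to pair the two cross terms $\mathbf{a}^\mathsf{T}\mathbf{A}\mathbf{b}$ and $\mathbf{b}^\mathsf{T}\mathbf{A}\mathbf{a}$, since without this hypothesis the identity would fail in general.
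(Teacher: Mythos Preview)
Your proof is correct; the paper actually states this lemma as a fact without proof, so your direct expansion via the substitution $\mathbf{a}=\mathbf{x}-\mathbf{y}$, $\mathbf{b}=\mathbf{z}-\mathbf{x}$ and the symmetry of $\mathbf{A}$ fills in exactly what the authors left implicit.
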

Making use of Lemma \ref{lemF}, we obtain:
\begin{align}\label{c1}
c\left(\mathbf{x}^{k+1}-\mathbf{x}^*\right)^\mathsf{T}\left(\mathbf{x}^k-\mathbf{x}^{k+1}\right)=\frac{c}{2}\left\|\mathbf{x}^k-\mathbf{x}^*\right\|_2^2-\frac{c}{2}\left\|\mathbf{x}^{k+1}-\mathbf{x}^*\right\|_2^2-\frac{c}{2}\left\|\mathbf{x}^k-\mathbf{x}^{k+1}\right\|_2^2,
\end{align}
and
\begin{align}\label{c2}
c\left(\mathbf{w}^{k+1}-\mathbf{w}^*\right)^\mathsf{T}\left(\mathbf{w}^k-\mathbf{w}^{k+1}\right)=\frac{c}{2}\left\|\mathbf{w}^k-\mathbf{w}^*\right\|_2^2-\frac{c}{2}\left\|\mathbf{w}^{k+1}-\mathbf{w}^*\right\|_2^2-\frac{c}{2}\left\|\mathbf{w}^k-\mathbf{w}^{k+1}\right\|_2^2.
\end{align}
Based on \eqref{aa}, we get:
\begin{align}
&\left(\mathbf{x}^{k+1}-\mathbf{x}^*\right)^\mathsf{T}\mathbf{B}^\mathsf{T}\left(\boldsymbol{\alpha}^*-\boldsymbol{\alpha}^k\right)+\left(\mathbf{w}^{k+1}-\mathbf{w}^*\right)^\mathsf{T}\left(\boldsymbol{\alpha}^k-\boldsymbol{\alpha}^*\right)\\
&=\left[-\mathbf{B}\left(\mathbf{x}^{k+1}-\mathbf{x}^*\right)+\mathbf{w}^{k+1}-\mathbf{w}^*\right]^\mathsf{T}\left(\boldsymbol{\alpha}^k-\boldsymbol{\alpha}^*\right)\\
&=\frac{1}{\rho}\left(\boldsymbol{\alpha}^k-\boldsymbol{\alpha}^{k+1}\right)^\mathsf{T}\left(\boldsymbol{\alpha}^k-\boldsymbol{\alpha}^*\right)\\
\label{c3}&=-\frac{1}{2\rho}\left\|\boldsymbol{\alpha}^*-\boldsymbol{\alpha}^{k+1}\right\|_2^2+\frac{1}{2\rho}\left\|\boldsymbol{\alpha}^k-\boldsymbol{\alpha}^{k+1}\right\|_2^2+\frac{1}{2\rho}\left\|\boldsymbol{\alpha}^*-\boldsymbol{\alpha}^k\right\|_2^2.
\end{align}
Again, using \eqref{aa}, we obtain:
\begin{align}
&\left(\mathbf{x}^{k+1}-\mathbf{x}^*\right)^\mathsf{T}\left[\rho\mathbf{B}^\mathsf{T}\mathbf{B}\left(\mathbf{x}^*-\mathbf{x}^{k+1}\right)+\rho\mathbf{B}^\mathsf{T}\left(\mathbf{w}^{k+1}-\mathbf{w}^*\right)\right]\nonumber\\
&+\left(\mathbf{w}^{k+1}-\mathbf{w}^*\right)^\mathsf{T}\left[\rho\left(\mathbf{w}^*-\mathbf{w}^{k+1}\right)+\rho\mathbf{B}\left(\mathbf{x}^{k+1}-\mathbf{x}^*\right)\right]\\
&=-\rho\left\|\mathbf{B}\left(\mathbf{x}^{k+1}-\mathbf{x}^*\right)+\mathbf{w}^*-\mathbf{w}^{k+1}\right\|_2^2\\
&=-\frac{1}{\rho}\left\|\boldsymbol{\alpha}^{k+1}-\boldsymbol{\alpha}^k\right\|_2^2.\label{c4}
\end{align}
Once again, using \eqref{aa}, we have:
\begin{align}
&\rho\left(\mathbf{x}^{k+1}-\mathbf{x}^*\right)^\mathsf{T}\mathbf{B}^\mathsf{T}\left(\mathbf{w}^k-\mathbf{w}^{k+1}\right)\\
&=\left[\boldsymbol{\alpha}^{k+1}-\boldsymbol{\alpha}^k+\rho\left(\mathbf{w}^{k+1}-\mathbf{w}^*\right)\right]^\mathsf{T}\left(\mathbf{w}^k-\mathbf{w}^{k+1}\right)\\
&=\left(\boldsymbol{\alpha}^{k+1}-\boldsymbol{\alpha}^k\right)^\mathsf{T}\left(\mathbf{w}^k-\mathbf{w}^{k+1}\right)+\frac{\rho}{2}\left\|\mathbf{w}^k-\mathbf{w}^*\right\|_2^2-\frac{\rho}{2}\left\|\mathbf{w}^{k+1}-\mathbf{w}^*\right\|_2^2-\frac{\rho}{2}\left\|\mathbf{w}^k-\mathbf{w}^{k+1}\right\|_2^2\\
&\leq\frac{1}{2}\left\|\frac{1}{\sqrt{2\rho}}\left(\boldsymbol{\alpha}^{k+1}-\boldsymbol{\alpha}^k\right)\right\|_2^2+\frac{1}{2}\left\|\sqrt{2\rho}\left(\mathbf{w}^k-\mathbf{w}^{k+1}\right)\right\|_2^2+\frac{\rho}{2}\left\|\mathbf{w}^k-\mathbf{w}^*\right\|_2^2-\frac{\rho}{2}\left\|\mathbf{w}^{k+1}-\mathbf{w}^*\right\|_2^2\nonumber\\
&~~-\frac{\rho}{2}\left\|\mathbf{w}^k-\mathbf{w}^{k+1}\right\|_2^2\\
\label{c5}&=\frac{1}{4\rho}\left\|\boldsymbol{\alpha}^{k+1}-\boldsymbol{\alpha}^k\right\|_2^2+\frac{\rho}{2}\left\|\mathbf{w}^k-\mathbf{w}^{k+1}\right\|_2^2+\frac{\rho}{2}\left\|\mathbf{w}^k-\mathbf{w}^*\right\|_2^2-\frac{\rho}{2}\left\|\mathbf{w}^{k+1}-\mathbf{w}^*\right\|_2^2.
\end{align}
Adding the results in \eqref{c1}, \eqref{c2}, \eqref{c3}, \eqref{c4} and \eqref{c5} all together and noting \eqref{cc1} and \eqref{cc2}, we get:
\begin{align}
&\left(\mathbf{x}^{k+1}-\mathbf{x}^*\right)^\mathsf{T}\left(\nabla f\left(\mathbf{x}^k\right)-\nabla f(\mathbf{x}^*)\right)+\left(\mathbf{w}^{k+1}-\mathbf{w}^*\right)^\mathsf{T}\left(\nabla g\left(\mathbf{w}^k\right)-\nabla g(\mathbf{w}^*)\right)\\
&\leq\frac{c}{2}\left\|\mathbf{x}^k-\mathbf{x}^*\right\|_2^2-\frac{c}{2}\left\|\mathbf{x}^{k+1}-\mathbf{x}^*\right\|_2^2-\frac{c}{2}\left\|\mathbf{x}^k-\mathbf{x}^{k+1}\right\|_2^2+\frac{c}{2}\left\|\mathbf{w}^k-\mathbf{w}^*\right\|_2^2-\frac{c}{2}\left\|\mathbf{w}^{k+1}-\mathbf{w}^*\right\|\nonumber\\
&~~-\frac{c}{2}\left\|\mathbf{w}^k-\mathbf{w}^{k+1}\right\|_2^2-\frac{1}{2\rho}\left\|\boldsymbol{\alpha}^*-\boldsymbol{\alpha}^{k+1}\right\|_2^2+\frac{1}{2\rho}\left\|\boldsymbol{\alpha}^*-\boldsymbol{\alpha}^k\right\|_2^2-\frac{1}{4\rho}\left\|\boldsymbol{\alpha}^{k+1}-\boldsymbol{\alpha}^k\right\|_2^2+\frac{\rho}{2}\left\|\mathbf{w}^k-\mathbf{w}^*\right\|_2^2\nonumber\\
&~~-\frac{\rho}{2}\left\|\mathbf{w}^{k+1}-\mathbf{w}^*\right\|_2^2+\frac{\rho}{2}\left\|\mathbf{w}^k-\mathbf{w}^{k+1}\right\|_2^2\\
&=\left\|\mathbf{u}^k-\mathbf{u}^*\right\|_\mathbf{\Lambda}^2-\left\|\mathbf{u}^{k+1}-\mathbf{u}^*\right\|_\mathbf{\Lambda}^2-\frac{c}{2}\left\|\mathbf{x}^k-\mathbf{x}^{k+1}\right\|_2^2-\frac{c-\rho}{2}\left\|\mathbf{w}^k-\mathbf{w}^{k+1}\right\|_2^2-\frac{1}{4\rho}\left\|\boldsymbol{\alpha}^{k+1}-\boldsymbol{\alpha}^k\right\|_2^2.\label{3}
\end{align}
Combining \eqref{1}, \eqref{2} and \eqref{3}, we obtain:
\begin{align}
&\frac{1}{L}\left\|\nabla f\left(\mathbf{x}^k\right)-\nabla f(\mathbf{x}^*)\right\|_2^2+\frac{1}{M}\left\|\nabla g\left(\mathbf{w}^k\right)-\nabla g(\mathbf{w}^*)\right\|_2^2\\
&\leq\left\|\mathbf{u}^k-\mathbf{u}^*\right\|_\mathbf{\Lambda}^2-\left\|\mathbf{u}^{k+1}-\mathbf{u}^*\right\|_\mathbf{\Lambda}^2-\frac{c}{2}\left\|\mathbf{x}^k-\mathbf{x}^{k+1}\right\|_2^2-\frac{c-\rho}{2}\left\|\mathbf{w}^k-\mathbf{w}^{k+1}\right\|_2^2-\frac{1}{4\rho}\left\|\boldsymbol{\alpha}^{k+1}-\boldsymbol{\alpha}^k\right\|_2^2\nonumber\\
&~~+\left(\mathbf{x}^k-\mathbf{x}^{k+1}\right)^\mathsf{T}\left(\nabla f\left(\mathbf{x}^k\right)-\nabla f(\mathbf{x}^*)\right)+\left(\mathbf{w}^k-\mathbf{w}^{k+1}\right)^\mathsf{T}\left(\nabla g\left(\mathbf{w}^k\right)-\nabla g(\mathbf{w}^*)\right)\\
&\leq\left\|\mathbf{u}^k-\mathbf{u}^*\right\|_\mathbf{\Lambda}^2-\left\|\mathbf{u}^{k+1}-\mathbf{u}^*\right\|_\mathbf{\Lambda}^2-\frac{c}{2}\left\|\mathbf{x}^k-\mathbf{x}^{k+1}\right\|_2^2-\frac{c-\rho}{2}\left\|\mathbf{w}^k-\mathbf{w}^{k+1}\right\|_2^2-\frac{1}{4\rho}\left\|\boldsymbol{\alpha}^{k+1}-\boldsymbol{\alpha}^k\right\|_2^2\nonumber\\
&~~+\frac{L}{4}\left\|\mathbf{x}^k-\mathbf{x}^{k+1}\right\|_2^2+\frac{1}{L}\left\|\nabla f\left(\mathbf{x}^k\right)-\nabla f(\mathbf{x}^*)\right\|_2^2+\frac{M}{4}\left\|\mathbf{w}^k-\mathbf{w}^{k+1}\right\|_2^2\nonumber\\
&~~+\frac{1}{M}\left\|\nabla g\left(\mathbf{w}^k\right)-\nabla g(\mathbf{w}^*)\right\|_2^2\label{4}
\end{align}
We can rewrite \eqref{4} as:
\begin{align}
\left\|\mathbf{u}^{k+1}-\mathbf{u}^*\right\|_\mathbf{\Lambda}^2\leq&\left\|\mathbf{u}^k-\mathbf{u}^*\right\|_\mathbf{\Lambda}^2-\left(\frac{c}{2}-\frac{L}{4}\right)\left\|\mathbf{x}^k-\mathbf{x}^{k+1}\right\|_2^2-\left(\frac{c-\rho}{2}-\frac{M}{4}\right)\left\|\mathbf{w}^k-\mathbf{w}^{k+1}\right\|_2^2\nonumber\\
&-\frac{1}{4\rho}\left\|\boldsymbol{\alpha}^{k+1}-\boldsymbol{\alpha}^k\right\|_2^2
\end{align}

\section*{Appendix C: Proof of Lemma \ref{lemC}}
Repeating the previous derivation, we know that $\underline{\mathbf{u}}$ and $\overline{\mathbf{u}}$ are both primal/dual optimal points of problem \eqref{admm_formulation_2}. Hence, according to Proposition 1 and $c>\frac{M}{2}+\rho$, we know that $\left\|\mathbf{u}^k-\underline{\mathbf{u}}\right\|_\mathbf{\Lambda}^2$ and $\left\|\mathbf{u}^k-\overline{\mathbf{u}}\right\|_\mathbf{\Lambda}^2$ are decreasing and convergent sequences. Define their limits to be $\underline{\eta}$ and $\overline{\eta}$, respectively, i.e., $\lim_{k\rightarrow\infty}\left\|\mathbf{u}^k-\underline{\mathbf{u}}\right\|_\mathbf{\Lambda}^2=\underline{\eta}$ and $\lim_{k\rightarrow\infty}\left\|\mathbf{u}^k-\overline{\mathbf{u}}\right\|_\mathbf{\Lambda}^2=\overline{\eta}$.
We have:
\begin{align}
&\left\|\mathbf{u}^k-\underline{\mathbf{u}}\right\|_\mathbf{\Lambda}^2-\left\|\mathbf{u}^k-\overline{\mathbf{u}}\right\|_\mathbf{\Lambda}^2\\
&=\frac{c}{2}\left\|\mathbf{x}^k-\underline{\mathbf{x}}\right\|_2^2+\frac{c+\rho}{2}\left\|\mathbf{w}^k-\underline{\mathbf{w}}\right\|_2^2+\frac{1}{2\rho}\left\|\boldsymbol{\alpha}^k-\underline{\boldsymbol{\alpha}}\right\|_2^2\nonumber\\
&~~~-\frac{c}{2}\left\|\mathbf{x}^k-\overline{\mathbf{x}}\right\|_2^2-\frac{c+\rho}{2}\left\|\mathbf{w}^k-\overline{\mathbf{w}}\right\|_2^2-\frac{1}{2\rho}\left\|\boldsymbol{\alpha}^k-\overline{\boldsymbol{\alpha}}\right\|_2^2\\
&\label{8}=c\mathbf{x}^{k\mathsf{T}}(\overline{\mathbf{x}}-\underline{\mathbf{x}})+(c+\rho)\mathbf{w}^{k\mathsf{T}}(\overline{\mathbf{w}}-\underline{\mathbf{w}})+\frac{1}{\rho}\boldsymbol{\alpha}^{k\mathsf{T}}(\overline{\boldsymbol{\alpha}}-\underline{\boldsymbol{\alpha}})\nonumber\\
&~~~+\frac{c}{2}\|\underline{\mathbf{x}}\|_2^2+\frac{c+\rho}{2}\|\underline{\mathbf{w}}\|_2^2+\frac{1}{2\rho}\|\underline{\boldsymbol{\alpha}}\|_2^2-\frac{c}{2}\|\overline{\mathbf{x}}\|_2^2-\frac{c+\rho}{2}\|\overline{\mathbf{w}}\|_2^2-\frac{1}{2\rho}\|\overline{\boldsymbol{\alpha}}\|_2^2\\
&\rightarrow \underline{\eta}-\overline{\eta},~\text{as}~k\rightarrow\infty.
\end{align}
Hence, any subsequence of $\left\|\mathbf{u}^k-\underline{\mathbf{u}}\right\|_\mathbf{\Lambda}^2-\left\|\mathbf{u}^k-\overline{\mathbf{u}}\right\|_\mathbf{\Lambda}^2$ converges to $\underline{\eta}-\overline{\eta}$ as well. In particular, subsequences $\left\|\mathbf{u}^{k_i}-\underline{\mathbf{u}}\right\|_\mathbf{\Lambda}^2-\left\|\mathbf{u}^{k_i}-\overline{\mathbf{u}}\right\|_\mathbf{\Lambda}^2$ and $\left\|\mathbf{u}^{k_i'}-\underline{\mathbf{u}}\right\|_\mathbf{\Lambda}^2-\left\|\mathbf{u}^{k_i'}-\overline{\mathbf{u}}\right\|_\mathbf{\Lambda}^2$ both converge to $\underline{\eta}-\overline{\eta}$, as $i$ goes to infinity. Noting that $\lim_{i\rightarrow\infty}\mathbf{u}^{k_i}=\underline{\mathbf{u}}$ and \eqref{8}, we obtain:
\begin{align}
&\underline{\eta}-\overline{\eta}\\
&=\lim_{i\rightarrow\infty}\left(\left\|\mathbf{u}^{k_i}-\underline{\mathbf{u}}\right\|_\mathbf{\Lambda}^2-\left\|\mathbf{u}^{k_i}-\overline{\mathbf{u}}\right\|_\mathbf{\Lambda}^2\right)\\
&=\lim_{i\rightarrow\infty}\bigg[c\mathbf{x}^{k_i\mathsf{T}}(\overline{\mathbf{x}}-\underline{\mathbf{x}})+(c+\rho)\mathbf{w}^{k_i\mathsf{T}}(\overline{\mathbf{w}}-\underline{\mathbf{w}})+\frac{1}{\rho}\boldsymbol{\alpha}^{k_i\mathsf{T}}(\overline{\boldsymbol{\alpha}}-\underline{\boldsymbol{\alpha}})\nonumber\\
&~~~+\frac{c}{2}\|\underline{\mathbf{x}}\|_2^2+\frac{c+\rho}{2}\|\underline{\mathbf{w}}\|_2^2+\frac{1}{2\rho}\|\underline{\boldsymbol{\alpha}}\|_2^2-\frac{c}{2}\|\overline{\mathbf{x}}\|_2^2-\frac{c+\rho}{2}\|\overline{\mathbf{w}}\|_2^2-\frac{1}{2\rho}\|\overline{\boldsymbol{\alpha}}\|_2^2\bigg]\\
&=c\underline{\mathbf{x}}^\mathsf{T}(\overline{\mathbf{x}}-\underline{\mathbf{x}})+(c+\rho)\underline{\mathbf{w}}^\mathsf{T}(\overline{\mathbf{w}}-\underline{\mathbf{w}})+\frac{1}{\rho}\underline{\boldsymbol{\alpha}}^\mathsf{T}(\overline{\boldsymbol{\alpha}}-\underline{\boldsymbol{\alpha}})\nonumber\\
&~~~+\frac{c}{2}\|\underline{\mathbf{x}}\|_2^2+\frac{c+\rho}{2}\|\underline{\mathbf{w}}\|_2^2+\frac{1}{2\rho}\|\underline{\boldsymbol{\alpha}}\|_2^2-\frac{c}{2}\|\overline{\mathbf{x}}\|_2^2-\frac{c+\rho}{2}\|\overline{\mathbf{w}}\|_2^2-\frac{1}{2\rho}\|\overline{\boldsymbol{\alpha}}\|_2^2.\label{9}
\end{align}
Similarly, from the perspective of $\mathbf{u}^{k_i'}$, we can get:
\begin{align}
&\underline{\eta}-\overline{\eta}\\
&=c\overline{\mathbf{x}}^\mathsf{T}(\overline{\mathbf{x}}-\underline{\mathbf{x}})+(c+\rho)\overline{\mathbf{w}}^\mathsf{T}(\overline{\mathbf{w}}-\underline{\mathbf{w}})+\frac{1}{\rho}\overline{\boldsymbol{\alpha}}^\mathsf{T}(\overline{\boldsymbol{\alpha}}-\underline{\boldsymbol{\alpha}})\nonumber\\
&~~~+\frac{c}{2}\|\underline{\mathbf{x}}\|_2^2+\frac{c+\rho}{2}\|\underline{\mathbf{w}}\|_2^2+\frac{1}{2\rho}\|\underline{\boldsymbol{\alpha}}\|_2^2-\frac{c}{2}\|\overline{\mathbf{x}}\|_2^2-\frac{c+\rho}{2}\|\overline{\mathbf{w}}\|_2^2-\frac{1}{2\rho}\|\overline{\boldsymbol{\alpha}}\|_2^2.\label{10}
\end{align}
Combining \eqref{9} and \eqref{10} yields:
\begin{align}
c\|\overline{\mathbf{x}}-\underline{\mathbf{x}}\|_2^2+(c+\rho)\|\overline{\mathbf{w}}-\underline{\mathbf{w}}\|_2^2+\frac{1}{\rho}\|\overline{\boldsymbol{\alpha}}-\underline{\boldsymbol{\alpha}}\|_2^2=0,
\end{align}
which implies $\underline{\mathbf{u}}=\overline{\mathbf{u}}$.

\section*{Appendix D\\Proof of Lemma \ref{lemD}}
For any $\mathbf{x,x'}\in\mathbb{R}^{np}$, we have:
\begin{align}
\left(\nabla f(\mathbf{x})-\nabla f(\mathbf{x}')\right)^\mathsf{T}(\mathbf{x}-\mathbf{x}')=\sum_{i=1}^n\left(\nabla f_i(\mathbf{x}_i)-\nabla f_i(\mathbf{x}_i')\right)^\mathsf{T}(\mathbf{x}_i-\mathbf{x}_i')\geq\sum_{i=1}^n\tau\|\mathbf{x}_i-\mathbf{x}_i'\|_2^2=\tau\left\|\mathbf{x}-\mathbf{x}'\right\|_2^2.
\end{align}
So, $f$ is strongly convex with constant $\tau$. Furthermore,
\begin{align}
&(\nabla g(\mathbf{y,z})-\nabla g(\mathbf{y',z'}))^\mathsf{T}\left(
\left[
\begin{array}{c}
\mathbf{y}\\
\mathbf{z}
\end{array}
\right]-
\left[
\begin{array}{c}
\mathbf{y}'\\
\mathbf{z}'
\end{array}
\right]
\right)\\
&=\sum_{i=1}^n\sum_{j\in\Omega_i}
\left[
\begin{array}{c}
\nabla_{\mathbf{y}_i}g_{ij}(\mathbf{y}_i,\mathbf{z}_{ij})-\nabla_{\mathbf{y}_i'}g_{ij}(\mathbf{y}_i',\mathbf{z}_{ij}')\\
\nabla_{\mathbf{z}_{ij}}g_{ij}(\mathbf{y}_i,\mathbf{z}_{ij})-\nabla_{\mathbf{z}_{ij}'}g_{ij}(\mathbf{y}_i',\mathbf{z}_{ij}')
\end{array}
\right]^\mathsf{T}
\left[
\begin{array}{c}
\mathbf{y}_i-\mathbf{y}_i'\\
\mathbf{z}_{ij}-\mathbf{z}_{ij}'
\end{array}
\right]\\
&\geq\sum_{i=1}^n\sum_{j\in\Omega_i}\tau\left\|
\left[
\begin{array}{c}
\mathbf{y}_i\\
\mathbf{z}_{ij}
\end{array}
\right]-
\left[
\begin{array}{c}
\mathbf{y}_i'\\
\mathbf{z}_{ij}'
\end{array}
\right]
\right\|_2^2\\
&=\tau\sum_{i=1}^n|\Omega_i|\|\mathbf{y}_i-\mathbf{y}_i'\|_2^2+\tau\sum_{i=1}^n\sum_{j\in\Omega_i}\|\mathbf{z}_{ij}-\mathbf{z}_{ij}'\|_2^2\\
&\geq\tau\left\|
\left[
\begin{array}{c}
\mathbf{y}\\
\mathbf{z}
\end{array}
\right]-
\left[
\begin{array}{c}
\mathbf{y}'\\
\mathbf{z}'
\end{array}
\right]
\right\|_2^2.
\end{align}
So, $g$ is strongly convex with constant $\tau$ as well.

\section*{Appendix E\\Proof of Theorem 2}
We first show the properness of the definitions of $\delta$ and $\beta$. Because $c>\rho+\frac{M^2}{2\tau}$, $\left(\frac{M^2}{c-\rho},2\tau\right)$ is a proper interval. On the interval $\left(\frac{M^2}{c-\rho},2\tau\right)$, the function of $\beta$ on the L.H.S. of \eqref{beta_def} decreases from some positive value to zero while the function of $\beta$ on the R.H.S. increases from zero to some positive value. This ensures that there exists a unique $\beta\in\left(\frac{M^2}{c-\rho},2\tau\right)$ such that Equation \eqref{beta_def} holds. As for $\delta$, since $c>\frac{L^2}{2\tau}$, the first term on the R.H.S. of \eqref{delta_def} is positive. Furthermore, because $\beta<2\tau$, the second term on the R.H.S. of \eqref{delta_def} is positive. Therefore, $\delta$ is positive.

Based on the definitions of $\delta$ and $\beta$, we have:
\begin{align}
&\left(\tau-\frac{L^2}{2c}-\frac{c\delta}{2}-\frac{3\rho\mu\Gamma^2\delta}{\mu-1}\right)\left\|\mathbf{x}^{k+1}-\mathbf{x}^*\right\|_2^2+\left(\tau-\frac{\beta}{2}-\frac{(c+\rho)\delta}{2}-\frac{3\rho\mu\delta}{\mu-1}-\frac{2M^2\mu\delta}{\rho}\right)\left\|\mathbf{w}^{k+1}-\mathbf{w}^*\right\|_2^2\nonumber\\
&~~~+\left(\frac{c-\rho}{2}-\frac{M^2}{2\beta}-\frac{3c^2\mu\delta}{\rho(\mu-1)}-\frac{2M^2\mu\delta}{\rho}\right)\left\|\mathbf{w}^k-\mathbf{w}^{k+1}\right\|_2^2+\frac{1-4\delta}{4\rho}\left\|\boldsymbol{\alpha}^{k+1}-\boldsymbol{\alpha}^k\right\|_2^2\geq0.
\end{align}
Hence,
\begin{align}
&\left(\tau-\frac{L^2}{2c}-\frac{c\delta}{2}-\frac{3\rho\mu\Gamma^2\delta}{\mu-1}\right)\left\|\mathbf{x}^{k+1}-\mathbf{x}^*\right\|_2^2+\left(\tau-\frac{\beta}{2}-\frac{(c+\rho)\delta}{2}-\frac{3\rho\mu\delta}{\mu-1}\right)\left\|\mathbf{w}^{k+1}-\mathbf{w}^*\right\|_2^2\nonumber\\
&~~~+\left(\frac{c-\rho}{2}-\frac{M^2}{2\beta}-\frac{3c^2\mu\delta}{\rho(\mu-1)}\right)\left\|\mathbf{w}^k-\mathbf{w}^{k+1}\right\|_2^2+\frac{1-4\delta}{4\rho}\left\|\boldsymbol{\alpha}^{k+1}-\boldsymbol{\alpha}^k\right\|_2^2\\
&\geq\frac{2M^2\mu\delta}{\rho}\left(\left\|\mathbf{w}^{k+1}-\mathbf{w}^*\right\|_2^2+\left\|\mathbf{w}^k-\mathbf{w}^{k+1}\right\|_2^2\right)\\
&\geq\frac{M^2\mu\delta}{\rho}\left\|\mathbf{w}^k-\mathbf{w}^*\right\|_2^2.
\end{align}
Equivalently,
\begin{align}
&\frac{c\delta}{2}\left\|\mathbf{x}^{k+1}-\mathbf{x}^*\right\|_2^2+\frac{(c+\rho)\delta}{2}\left\|\mathbf{w}^{k+1}-\mathbf{w}^*\right\|_2^2+\frac{\delta}{\rho}\left\|\boldsymbol{\alpha}^{k+1}-\boldsymbol{\alpha}^k\right\|_2^2+\frac{\delta\mu M^2}{\rho}\left\|\mathbf{w}^k-\mathbf{w}^*\right\|_2^2\nonumber\\
&~~~+\frac{3c^2\mu\delta}{\rho(\mu-1)}\left\|\mathbf{w}^{k+1}-\mathbf{w}^k\right\|_2^2+\frac{3\rho\mu\delta}{\mu-1}\left\|\mathbf{w}^{k+1}-\mathbf{w}^*\right\|_2^2+\frac{3\rho\mu\delta\Gamma^2}{\mu-1}\left\|\mathbf{x}^*-\mathbf{x}^{k+1}\right\|_2^2\nonumber\\
&\leq \tau\left\|\mathbf{x}^{k+1}-\mathbf{x}^*\right\|_2^2+\tau\left\|\mathbf{w}^{k+1}-\mathbf{w}^*\right\|_2^2+\frac{c-\rho}{2}\left\|\mathbf{w}^k-\mathbf{w}^{k+1}\right\|_2^2+\frac{1}{4\rho}\left\|\boldsymbol{\alpha}^{k+1}-\boldsymbol{\alpha}^k\right\|_2^2\nonumber\\
\label{11}&~~~-\frac{L^2}{2c}\left\|\mathbf{x}^{k+1}-\mathbf{x}^*\right\|_2^2-\frac{\beta}{2}\left\|\mathbf{w}^{k+1}-\mathbf{w}^*\right\|_2^2-\frac{M^2}{2\beta}\left\|\mathbf{w}^{k+1}-\mathbf{w}^k\right\|_2^2
\end{align}
For the last three terms of the L.H.S. of \eqref{11}, we note that:
\begin{align}
&\left\|c\left(\mathbf{w}^{k+1}-\mathbf{w}^k\right)+\rho\left(\mathbf{w}^{k+1}-\mathbf{w}^*\right)+\rho\mathbf{B}\left(\mathbf{x}^*-\mathbf{x}^{k+1}\right)\right\|_2^2\\
&\leq3c^2\left\|\mathbf{w}^{k+1}-\mathbf{w}^k\right\|_2^2+3\rho^2\left\|\mathbf{w}^{k+1}-\mathbf{w}^*\right\|_2^2+3\rho^2\left\|\mathbf{B}\left(\mathbf{x}^*-\mathbf{x}^{k+1}\right)\right\|_2^2\\
&\leq3c^2\left\|\mathbf{w}^{k+1}-\mathbf{w}^k\right\|_2^2+3\rho^2\left\|\mathbf{w}^{k+1}-\mathbf{w}^*\right\|_2^2+3\rho^2\Gamma^2\left\|\mathbf{x}^*-\mathbf{x}^{k+1}\right\|_2^2,
\end{align}
where the last step is due to the definition of $\Gamma$ (the spectral norm of $\mathbf{B}$). Substituting this result into \eqref{11} yields:
\begin{align}
&\frac{c\delta}{2}\left\|\mathbf{x}^{k+1}-\mathbf{x}^*\right\|_2^2+\frac{(c+\rho)\delta}{2}\left\|\mathbf{w}^{k+1}-\mathbf{w}^*\right\|_2^2+\frac{\delta}{\rho}\left\|\boldsymbol{\alpha}^{k+1}-\boldsymbol{\alpha}^k\right\|_2^2+\frac{\delta\mu M^2}{\rho}\left\|\mathbf{w}^k-\mathbf{w}^*\right\|_2^2\nonumber\\
&~~~+\frac{\mu\delta}{\rho(\mu-1)}\left\|c\left(\mathbf{w}^{k+1}-\mathbf{w}^k\right)+\rho\left(\mathbf{w}^{k+1}-\mathbf{w}^*\right)+\rho\mathbf{B}\left(\mathbf{x}^*-\mathbf{x}^{k+1}\right)\right\|_2^2\nonumber\\
&\leq \tau\left\|\mathbf{x}^{k+1}-\mathbf{x}^*\right\|_2^2+\tau\left\|\mathbf{w}^{k+1}-\mathbf{w}^*\right\|_2^2+\frac{c-\rho}{2}\left\|\mathbf{w}^k-\mathbf{w}^{k+1}\right\|_2^2+\frac{1}{4\rho}\left\|\boldsymbol{\alpha}^{k+1}-\boldsymbol{\alpha}^k\right\|_2^2\nonumber\\
\label{14}&~~~-\frac{L^2}{2c}\left\|\mathbf{x}^{k+1}-\mathbf{x}^*\right\|_2^2-\frac{\beta}{2}\left\|\mathbf{w}^{k+1}-\mathbf{w}^*\right\|_2^2-\frac{M^2}{2\beta}\left\|\mathbf{w}^{k+1}-\mathbf{w}^k\right\|_2^2.
\end{align}
Based on Assumption 2, Lemma \ref{lemA} and Equation \eqref{gg}, we have:
\begin{align}
&M^2\left\|\mathbf{w}^k-\mathbf{w}^*\right\|_2^2\\
&\geq\left\|\nabla g\left(\mathbf{w}^k\right)-\nabla g\left(\mathbf{w}^*\right)\right\|_2^2\\
&=\left\|c\left(\mathbf{w}^{k+1}-\mathbf{w}^k\right)+\boldsymbol{\alpha}^*-\boldsymbol{\alpha}^k+\rho\left(\mathbf{w}^{k+1}-\mathbf{w}^*\right)+\rho\mathbf{B}\left(\mathbf{x}^*-\mathbf{x}^{k+1}\right)\right\|_2^2\\
&\geq\frac{1}{\mu}\left\|\boldsymbol{\alpha}^*-\boldsymbol{\alpha}^k\right\|_2^2-\frac{1}{\mu-1}\left\|c\left(\mathbf{w}^{k+1}-\mathbf{w}^k\right)+\rho\left(\mathbf{w}^{k+1}-\mathbf{w}^*\right)+\rho\mathbf{B}\left(\mathbf{x}^*-\mathbf{x}^{k+1}\right)\right\|_2^2,
\end{align}
where $\mu>1$ is any constant greater than 1 and the last step makes use of the fact: $\|\mathbf{x+y}\|_2^2\geq\frac{1}{\mu}\|\mathbf{y}\|_2^2-\frac{1}{\mu-1}\|\mathbf{x}\|_2^2$ for any $\mu>1$ and any vectors $\mathbf{x,y}$. Thereby,

\begin{align}
\left\|\boldsymbol{\alpha}^*-\boldsymbol{\alpha}^k\right\|_2^2\leq\mu M^2\left\|\mathbf{w}^k-\mathbf{w}^*\right\|_2^2+\frac{\mu}{\mu-1}\left\|c\left(\mathbf{w}^{k+1}-\mathbf{w}^k\right)+\rho\left(\mathbf{w}^{k+1}-\mathbf{w}^*\right)+\rho\mathbf{B}\left(\mathbf{x}^*-\mathbf{x}^{k+1}\right)\right\|_2^2.\label{12}
\end{align}

Using \eqref{12}, we obtain:
\begin{align}
&\left\|\boldsymbol{\alpha}^{k+1}-\boldsymbol{\alpha}^*\right\|_2^2\\
&\leq 2\left\|\boldsymbol{\alpha}^{k+1}-\boldsymbol{\alpha}^k\right\|_2^2+2\left\|\boldsymbol{\alpha}^k-\boldsymbol{\alpha}^*\right\|_2^2\\
&\leq 2\left\|\boldsymbol{\alpha}^{k+1}-\boldsymbol{\alpha}^k\right\|_2^2+2\mu M^2\left\|\mathbf{w}^k-\mathbf{w}^*\right\|_2^2\nonumber\\
\label{13}&~~~+\frac{2\mu}{\mu-1}\left\|c\left(\mathbf{w}^{k+1}-\mathbf{w}^k\right)+\rho\left(\mathbf{w}^{k+1}-\mathbf{w}^*\right)+\rho\mathbf{B}\left(\mathbf{x}^*-\mathbf{x}^{k+1}\right)\right\|_2^2.
\end{align}
By exploiting Cauchy's inequality and Assumption 2, Lemma \ref{lemA}, we get:
\begin{align}
&-\left(\mathbf{x}^{k+1}-\mathbf{x}^*\right)^\mathsf{T}\left(\nabla f\left(\mathbf{x}^{k+1}\right)-\nabla f\left(\mathbf{x}^k\right)\right)\\
&\geq\frac{L^2}{2c}\left\|\mathbf{x}^{k+1}-\mathbf{x}^*\right\|_2^2-\frac{c}{2L^2}\left\|\nabla f\left(\mathbf{x}^{k+1}\right)-\nabla f\left(\mathbf{x}^k\right)\right\|_2^2\\
&\label{e1}\geq\frac{L^2}{2c}\left\|\mathbf{x}^{k+1}-\mathbf{x}^*\right\|_2^2-\frac{c}{2}\left\|\mathbf{x}^{k+1}-\mathbf{x}^k\right\|_2^2,
\end{align}
and
\begin{align}
&-\left(\mathbf{w}^{k+1}-\mathbf{w}^*\right)^\mathsf{T}\left(\nabla g\left(\mathbf{w}^{k+1}\right)-\nabla g\left(\mathbf{w}^k\right)\right)\\
&\geq-\frac{\beta}{2}\left\|\mathbf{w}^{k+1}-\mathbf{w}^*\right\|_2^2-\frac{1}{2\beta}\left\|\nabla g\left(\mathbf{w}^{k+1}\right)-\nabla g\left(\mathbf{w}^k\right)\right\|_2^2\\
&\label{e2}\geq-\frac{\beta}{2}\left\|\mathbf{w}^{k+1}-\mathbf{w}^*\right\|_2^2-\frac{M^2}{2\beta}\left\|\mathbf{w}^{k+1}-\mathbf{w}^k\right\|_2^2.
\end{align}
Therefore,
\begin{align}
&\delta\left\|\mathbf{u}^{k+1}-\mathbf{u}^*\right\|_\mathbf{\Lambda}^2\\
&=\frac{\delta c}{2}\left\|\mathbf{x}^{k+1}-\mathbf{x}^*\right\|_2^2+\frac{\delta(\rho+c)}{2}\left\|\mathbf{w}^{k+1}-\mathbf{w}^*\right\|_2^2+\frac{\delta}{2\rho}\left\|\boldsymbol{\alpha}^{k+1}-\boldsymbol{\alpha}^*\right\|_2^2\\
&\leq \frac{\delta c}{2}\left\|\mathbf{x}^{k+1}-\mathbf{x}^*\right\|_2^2+\frac{\delta(\rho+c)}{2}\left\|\mathbf{w}^{k+1}-\mathbf{w}^*\right\|_2^2+\frac{\delta}{\rho}\left\|\boldsymbol{\alpha}^{k+1}-\boldsymbol{\alpha}^k\right\|_2^2+\frac{\delta\mu M^2}{\rho}\left\|\mathbf{w}^k-\mathbf{w}^*\right\|_2^2\nonumber\\
\label{13'}&~~~+\frac{\delta\mu}{\rho(\mu-1)}\left\|c\left(\mathbf{w}^{k+1}-\mathbf{w}^k\right)+\rho\left(\mathbf{w}^{k+1}-\mathbf{w}^*\right)+\rho\mathbf{B}\left(\mathbf{x}^*-\mathbf{x}^{k+1}\right)\right\|_2^2\\
&\leq\tau\left\|\mathbf{x}^{k+1}-\mathbf{x}^*\right\|_2^2+\tau\left\|\mathbf{w}^{k+1}-\mathbf{w}^*\right\|_2^2+\frac{c-\rho}{2}\left\|\mathbf{w}^k-\mathbf{w}^{k+1}\right\|_2^2+\frac{1}{4\rho}\left\|\boldsymbol{\alpha}^{k+1}-\boldsymbol{\alpha}^k\right\|_2^2\nonumber\\
\label{14'}&~~~-\frac{L^2}{2c}\left\|\mathbf{x}^{k+1}-\mathbf{x}^*\right\|_2^2-\frac{\beta}{2}\left\|\mathbf{w}^{k+1}-\mathbf{w}^*\right\|_2^2-\frac{M^2}{2\beta}\left\|\mathbf{w}^{k+1}-\mathbf{w}^k\right\|_2^2.\\
&\leq\tau\left\|\mathbf{x}^{k+1}-\mathbf{x}^*\right\|_2^2+\tau\left\|\mathbf{w}^{k+1}-\mathbf{w}^*\right\|_2^2+\frac{c}{2}\left\|\mathbf{x}^{k+1}-\mathbf{x}^k\right\|_2^2+\frac{c-\rho}{2}\left\|\mathbf{w}^k-\mathbf{w}^{k+1}\right\|_2^2\nonumber\\
&~~~+\frac{1}{4\rho}\left\|\boldsymbol{\alpha}^{k+1}-\boldsymbol{\alpha}^k\right\|_2^2-\left(\mathbf{x}^{k+1}-\mathbf{x}^*\right)^\mathsf{T}\left(\nabla f\left(\mathbf{x}^{k+1}\right)-\nabla f\left(\mathbf{x}^k\right)\right)\nonumber\\
&~~~-\left(\mathbf{w}^{k+1}-\mathbf{w}^*\right)^\mathsf{T}\left(\nabla g\left(\mathbf{w}^{k+1}\right)-\nabla g\left(\mathbf{w}^k\right)\right),\label{e3}
\end{align}
where \eqref{13'} is due to \eqref{13}; \eqref{14'} comes from \eqref{14}; and \eqref{e3} is because of \eqref{e1} and \eqref{e2}. Due to Assumption 3 and Lemma \ref{lemD}, we have:
\begin{align}
&\tau\left\|\mathbf{x}^{k+1}-\mathbf{x}^*\right\|_2^2\\
&\leq\left(\nabla f\left(\mathbf{x}^{k+1}\right)-\nabla f\left(\mathbf{x}^*\right)\right)^\mathsf{T}\left(\mathbf{x}^{k+1}-\mathbf{x}^*\right)\\
&=\left(\mathbf{x}^{k+1}-\mathbf{x}^*\right)^\mathsf{T}\left(\nabla f\left(\mathbf{x}^k\right)-\nabla f\left(\mathbf{x}^*\right)\right)+\left(\mathbf{x}^{k+1}-\mathbf{x}^*\right)^\mathsf{T}\left(\nabla f\left(\mathbf{x}^{k+1}\right)-\nabla f\left(\mathbf{x}^k\right)\right),\label{ee1}
\end{align}
and,
\begin{align}
&\tau\left\|\mathbf{w}^{k+1}-\mathbf{w}^*\right\|_2^2\\
&\leq\left(\nabla g\left(\mathbf{w}^{k+1}\right)-\nabla g\left(\mathbf{w}^*\right)\right)^\mathsf{T}\left(\mathbf{w}^{k+1}-\mathbf{w}^*\right)\\
&=\left(\mathbf{w}^{k+1}-\mathbf{w}^*\right)^\mathsf{T}\left(\nabla g\left(\mathbf{w}^k\right)-\nabla g\left(\mathbf{w}^*\right)\right)+\left(\mathbf{w}^{k+1}-\mathbf{w}^*\right)^\mathsf{T}\left(\nabla g\left(\mathbf{w}^{k+1}\right)-\nabla g\left(\mathbf{w}^k\right)\right),\label{ee2}
\end{align}

Adding \eqref{ee1} and \eqref{ee2} and using \eqref{3}, we get:
\begin{align}
&\tau\left\|\mathbf{x}^{k+1}-\mathbf{x}^*\right\|_2^2+\tau\left\|\mathbf{w}^{k+1}-\mathbf{w}^*\right\|_2^2\nonumber\\
&\leq \left\|\mathbf{u}^k-\mathbf{u}^*\right\|_\mathbf{\Lambda}^2-\left\|\mathbf{u}^{k+1}-\mathbf{u}^*\right\|_\mathbf{\Lambda}^2-\frac{c}{2}\left\|\mathbf{x}^k-\mathbf{x}^{k+1}\right\|_2^2-\frac{c-\rho}{2}\left\|\mathbf{w}^k-\mathbf{w}^{k+1}\right\|_2^2-\frac{1}{4\rho}\left\|\boldsymbol{\alpha}^{k+1}-\boldsymbol{\alpha}^k\right\|_2^2\nonumber\\
&~~~+\left(\mathbf{x}^{k+1}-\mathbf{x}^*\right)^\mathsf{T}\left(\nabla f\left(\mathbf{x}^{k+1}\right)-\nabla f\left(\mathbf{x}^k\right)\right)+\left(\mathbf{w}^{k+1}-\mathbf{w}^*\right)^\mathsf{T}\left(\nabla g\left(\mathbf{w}^{k+1}\right)-\nabla g\left(\mathbf{w}^k\right)\right).\label{k1}
\end{align}

Combining \eqref{e3} and \eqref{k1} leads to:
\begin{align}
\delta\left\|\mathbf{u}^{k+1}-\mathbf{u}^*\right\|_\mathbf{\Lambda}^2\leq\left\|\mathbf{u}^k-\mathbf{u}^*\right\|_\mathbf{\Lambda}^2-\left\|\mathbf{u}^{k+1}-\mathbf{u}^*\right\|_\mathbf{\Lambda}^2,
\end{align}
which is tantamount to \eqref{linear_convergence}.

\bibliography{mybib}{}

\begin{thebibliography}{10}

\bibitem{sayed2014adaptive}
A.~H. Sayed, ``Adaptive networks,'' {\em Proceedings of the IEEE}, vol.~102,
  no.~4, pp.~460--497, 2014.

\bibitem{dimakis2010gossip}
A.~G. Dimakis, S.~Kar, J.~M. Moura, M.~G. Rabbat, and A.~Scaglione, ``Gossip
  algorithms for distributed signal processing,'' {\em Proceedings of the
  IEEE}, vol.~98, no.~11, pp.~1847--1864, 2010.

\bibitem{predd2009collaborative}
J.~B. Predd, S.~R. Kulkarni, and H.~V. Poor, ``A collaborative training
  algorithm for distributed learning,'' {\em IEEE Transactions on Information
  Theory}, vol.~55, no.~4, pp.~1856--1871, 2009.

\bibitem{guo2017hierarchical}
Y.~Guo, L.~Tong, W.~Wu, H.~Sun, and B.~Zhang, ``Hierarchical multi-area state
  estimation via sensitivity function exchanges,'' {\em IEEE Transactions on
  Power Systems}, vol.~32, no.~1, pp.~442--453, 2017.

\bibitem{gan2013optimal}
L.~Gan, U.~Topcu, and S.~H. Low, ``Optimal decentralized protocol for electric
  vehicle charging,'' {\em IEEE Transactions on Power Systems}, vol.~28, no.~2,
  pp.~940--951, 2013.

\bibitem{shen2012distributed}
C.~Shen, T.-H. Chang, K.-Y. Wang, Z.~Qiu, and C.-Y. Chi, ``Distributed robust
  multicell coordinated beamforming with imperfect csi: An admm approach,''
  {\em IEEE Transactions on signal processing}, vol.~60, no.~6, pp.~2988--3003,
  2012.

\bibitem{huang2006distributed}
J.~Huang, R.~A. Berry, and M.~L. Honig, ``Distributed interference compensation
  for wireless networks,'' {\em IEEE Journal on Selected Areas in
  Communications}, vol.~24, no.~5, pp.~1074--1084, 2006.

\bibitem{wei2013distributed1}
E.~Wei, A.~Ozdaglar, and A.~Jadbabaie, ``A distributed newton method for
  network utility maximization--i: Algorithm,'' {\em IEEE Transactions on
  Automatic Control}, vol.~58, no.~9, pp.~2162--2175, 2013.

\bibitem{wei2013distributed2}
E.~Wei, A.~Ozdaglar, and A.~Jadbabaie, ``A distributed newton method for
  network utility maximization—part ii: Convergence,'' {\em IEEE Transactions
  on Automatic Control}, vol.~58, no.~9, pp.~2176--2188, 2013.

\bibitem{zhang2008impact}
J.~Zhang, D.~Zheng, and M.~Chiang, ``The impact of stochastic noisy feedback on
  distributed network utility maximization,'' {\em IEEE Transactions on
  Information Theory}, vol.~54, no.~2, pp.~645--665, 2008.

\bibitem{niu2016asynchronous}
D.~Niu and B.~Li, ``An asynchronous fixed-point algorithm for resource sharing
  with coupled objectives,'' {\em IEEE/ACM Transactions on Networking},
  vol.~24, no.~5, pp.~2593--2606, 2016.

\bibitem{nedic2009distributed}
A.~Nedic and A.~Ozdaglar, ``Distributed subgradient methods for multi-agent
  optimization,'' {\em IEEE Transactions on Automatic Control}, vol.~54, no.~1,
  pp.~48--61, 2009.

\bibitem{duchi2012dual}
J.~C. Duchi, A.~Agarwal, and M.~J. Wainwright, ``Dual averaging for distributed
  optimization: Convergence analysis and network scaling,'' {\em IEEE
  Transactions on Automatic control}, vol.~57, no.~3, pp.~592--606, 2012.

\bibitem{erseghe2011fast}
T.~Erseghe, D.~Zennaro, E.~Dall'Anese, and L.~Vangelista, ``Fast consensus by
  the alternating direction multipliers method,'' {\em IEEE Transactions on
  Signal Processing}, vol.~59, no.~11, pp.~5523--5537, 2011.

\bibitem{shi2014linear}
W.~Shi, Q.~Ling, K.~Yuan, G.~Wu, and W.~Yin, ``On the linear convergence of the
  admm in decentralized consensus optimization,'' {\em IEEE Transactions on
  Signal Processing}, vol.~62, no.~7, pp.~1750--1761, 2014.

\bibitem{ling2015dlm}
Q.~Ling, W.~Shi, G.~Wu, and A.~Ribeiro, ``Dlm: Decentralized linearized
  alternating direction method of multipliers,'' {\em IEEE Transactions on
  Signal Processing}, vol.~63, no.~15, pp.~4051--4064, 2015.

\bibitem{mokhtari2016dqm}
A.~Mokhtari, W.~Shi, Q.~Ling, and A.~Ribeiro, ``Dqm: Decentralized
  quadratically approximated alternating direction method of multipliers,''
  {\em IEEE Transactions on Signal Processing}, vol.~64, no.~19,
  pp.~5158--5173, 2016.

\bibitem{ling2014decentralized}
Q.~Ling and A.~Ribeiro, ``Decentralized dynamic optimization through the
  alternating direction method of multipliers,'' {\em IEEE Transactions on
  Signal Processing}, vol.~62, no.~5, pp.~1185--1197, 2014.

\bibitem{chen2014multitask}
J.~Chen, C.~Richard, and A.~H. Sayed, ``Multitask diffusion adaptation over
  networks,'' {\em IEEE Transactions on Signal Processing}, vol.~62, no.~16,
  pp.~4129--4144, 2014.

\bibitem{boyd2011distributed}
S.~Boyd, N.~Parikh, E.~Chu, B.~Peleato, and J.~Eckstein, ``Distributed
  optimization and statistical learning via the alternating direction method of
  multipliers,'' {\em Foundations and Trends{\textregistered} in Machine
  Learning}, vol.~3, no.~1, pp.~1--122, 2011.

\bibitem{monajemi2016informed}
S.~Monajemi, K.~Eftaxias, S.~Sanei, and S.-H. Ong, ``An informed multitask
  diffusion adaptation approach to study tremor in parkinson's disease,'' {\em
  IEEE Journal of Selected Topics in Signal Processing}, vol.~10, no.~7,
  pp.~1306--1314, 2016.

\bibitem{boyd2004convex}
S.~Boyd and L.~Vandenberghe, {\em Convex optimization}.
\newblock Cambridge university press, 2004.

\bibitem{deng2016global}
W.~Deng and W.~Yin, ``On the global and linear convergence of the generalized
  alternating direction method of multipliers,'' {\em Journal of Scientific
  Computing}, vol.~66, no.~3, pp.~889--916, 2016.

\bibitem{zhang2016admm}
J.~Zhang, S.~Nabavi, A.~Chakrabortty, and Y.~Xin, ``Admm optimization
  strategies for wide-area oscillation monitoring in power systems under
  asynchronous communication delays,'' {\em IEEE Transactions on Smart Grid},
  vol.~7, no.~4, pp.~2123--2133, 2016.

\bibitem{chang2014proximal}
T.-H. Chang, ``A proximal dual consensus admm method for multi-agent
  constrained optimization,'' {\em IEEE Transactions on Signal Processing},
  vol.~64, no.~14, pp.~3719--3734, 2014.

\bibitem{bertsekas1989parallel}
D.~P. Bertsekas and J.~N. Tsitsiklis, {\em Parallel and distributed
  computation: numerical methods}, vol.~23.
\newblock Prentice hall Englewood Cliffs, NJ, 1989.

\bibitem{Haykin:1996:AFT:230061}
S.~Haykin, {\em Adaptive Filter Theory (3rd Ed.)}.
\newblock Upper Saddle River, NJ, USA: Prentice-Hall, Inc., 1996.

\bibitem{vandenberghe2016gradient}
L.~Vandenberghe, ``Gradient method,'' {\em lecture notes of EE236C, UCLA},
  2016.

\bibitem{cvx}
M.~Grant and S.~Boyd, ``{CVX}: Matlab software for disciplined convex
  programming, version 2.1.'' http://cvxr.com/cvx, mar 2014.

\bibitem{gb08}
M.~Grant and S.~Boyd, ``Graph implementations for nonsmooth convex programs,''
  in {\em Recent Advances in Learning and Control} (V.~Blondel, S.~Boyd, and
  H.~Kimura, eds.), Lecture Notes in Control and Information Sciences,
  pp.~95--110, Springer-Verlag Limited, 2008.

\bibitem{watts1998collective}
D.~J. Watts and S.~H. Strogatz, ``Collective dynamics of small-world
  networks,'' {\em nature}, vol.~393, no.~6684, pp.~440--442, 1998.

\end{thebibliography}
\bibliographystyle{ieeetr}

\end{document}